\newtheorem{lemma}{Lemma}[section]
\newtheorem{lemma*}{Lemma}
\newtheorem{theorem}[lemma]{Theorem}
\newtheorem{proposition}[lemma]{Proposition}
\newtheorem{claim*}{Claim}
\newtheorem{defn}[lemma]{Definition}
\theoremstyle{definition}
\newtheorem{remark}[lemma]{Remark}
\theoremstyle{plain}
    \newtheoremstyle{TheoremNum}
        {\topsep}{\topsep}              %%% space between body and thm
        {\itshape}                      %%% Thm body font
        {}                              %%% Indent amount (empty = no indent)
        {\bfseries}                     %%% Thm head font
        {.}                             %%% Punctuation after thm head
        { }                             %%% Space after thm head
        {\thmname{#1}\thmnote{ \bfseries #3}}%%% Thm head spec
    \theoremstyle{TheoremNum}
\newcommand{\C}{{\mathbb C}}
\newcommand{\F}{{\mathbb F}}
\newcommand{\Q}{{\mathbb Q}}
\newcommand{\R}{{\mathbb R}}
\newcommand{\Z}{{\mathbb Z}}
\DeclareMathOperator{\tr}{tr}
\DeclareMathOperator{\Hom}{Hom}
\DeclareMathOperator{\Spec}{Spec}
\DeclareMathOperator{\SL}{SL}
\numberwithin{equation}{section}
\numberwithin{table}{section}
\title{Finite covers and strict boundary slopes of cusped hyperbolic 3-manifolds}
\author{Tam Cheetham-West}
\address{Department of Mathematics \\ Yale University \\ New Haven, CT, 06511}
  \email{tamunonye.cheetham-west@yale.edu}
\author{Youheng Yao}
\address{Department of Mathematics \\ Yale University \\ New Haven, CT, 06511}
  \email{youheng.yao@yale.edu}
\begin{document}
\pagestyle{plain}
\maketitle
\begin{abstract}
    We prove that if two cusped hyperbolic $3$-manifolds admit a regular isomorphism between the profinite completions of their fundamental groups, then they share the same $A$-polynomial and their strongly detected boundary slopes match up. 
\end{abstract}
\bibliographystyle{alpha}

\section{Introduction}
Let $M$ be an orientable, compact, irreducible $3$-manifold. Recently, the problem regarding determining geometric and topological properties of $M$ using the profinite completion $\widehat{\pi_1(M)}$ of $\pi_1(M)$ has drawn significant attention. 
In this paper, we focus on the $A$-polynomial introduced in \cite{CCGLS}, which is a two-variable polynomial associated to an one-cusped $3$-manifold $M$ and encodes rich geometric and topological information about $M$. When $M$ is hyperbolic one-cusped $3$-manifold, we show that regular isomorphisms of profinite completions can only exist between cusped 3-manifolds that have the same $A$-polynomial. For example, we show

\begin{theorem}[{\Cref{pro:A_poly_profinite_inv}}]
\label{thm_A:A_poly}
    Let $M,N$ be one-cusped finite-volume hyperbolic 3-manifolds with $\Phi:\widehat{\pi_1(M)}\cong \widehat{\pi_1(N)}$ a regular isomorphism. Then they have the same $A$-polynomial $A_0^{M}(l,m) = A_0^{N}(l,m)$ which respects the peripheral structure.
\end{theorem}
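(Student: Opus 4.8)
The plan is to realize $A_0^M$ as an arithmetic invariant of the $\SL_2$ character scheme and then to reconstruct it, modulo all but finitely many primes, from the representation-counting data that a regular isomorphism preserves. Recall that $A_0^M$ is a normalization of the defining Laurent polynomial of the curve $C_M\subset\G_m^2$ obtained as the Zariski closure of the image, under restriction to the peripheral torus $T_M=\partial M$, of those components of $X(M)$ whose restriction is one-dimensional, where $X(T_M)$ is coordinatized by the eigenvalue pair $(l,m)$ of a lift of the chosen meridian--longitude basis $(\mu,\lambda)$, and where the abelian component $\{l=1\}$ is excised. All of this makes sense over $\Z$: the representation scheme $\Hom(\pi_1(M),\SL_2)$ is an affine $\Z$--scheme, the eigenvalue variety $E_M$ of pairs (representation, compatible eigenvalues of $\rho(\mu),\rho(\lambda)$) is a closed $\Z$--subscheme of $\Hom(\pi_1(M),\SL_2)\times\G_m^2$, and $C_M$ together with its multiplicity data is extracted from the closure of the image of the non--abelian, positive--dimensional part of $E_M$. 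Since $A_0^M\in\Z[l^{\pm1},m^{\pm1}]$, it suffices to prove that $A_0^M$ and $A_0^N$ agree after reduction modulo almost every prime $p$, in meridian--longitude coordinates matched by $\Phi$; and this will follow once the reduced curves $(C_M)_{\Fbar_p}$ and $(C_N)_{\Fbar_p}$ are shown to correspond under the peripheral identification, together with the generic fibre cardinalities of $E_M\to\G_m^2$.

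The only input from $\Phi$ is the following. Because $\SL_2(\F_q)$ is finite, $\Phi$ induces for every prime power $q$ a bijection $\Hom(\pi_1(N),\SL_2(\F_q))\xrightarrow{\ \sim\ }\Hom(\pi_1(M),\SL_2(\F_q))$, natural in the target group and hence compatible with base change $\F_q\subset\F_{q'}$, i.e.\ with Frobenius. Because $\Phi$ is regular it is moreover compatible, up to conjugacy, with the induced isomorphism $\gamma\in\GL_2(\Zhat)$ of peripheral subgroups in fixed meridian--longitude bases. Consequently, for every $q$ and every conjugacy class of commuting pairs in $\SL_2(\F_q)$ realizing a prescribed peripheral eigenvalue pair, the number of representations of $\pi_1(M)$ restricting to it on $T_M$ equals the number of representations of $\pi_1(N)$ restricting on $T_N$ to the $\gamma$--transformed class, and these equalities are compatible with Frobenius and with passage to $\F_{q^2}$ when the eigenvalues are not rational over $\F_q$. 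In other words, $E_M$ and $E_N$ have, fibrewise over $\G_m^2$ and after the substitution $\gamma$, equinumerous $\F_{q^k}$--points Galois--equivariantly, for all $p$ and all $k$.

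The technical heart is to distil $C_M$ from this fibrewise count. The abelian locus contributes exactly the line $\{l=1\}$, since the longitude is null--homologous, and the number of abelian representations with prescribed peripheral eigenvalues is computable from $H_1(M)$ alone; subtracting it leaves the non--abelian count. For $M$ hyperbolic, every component of $X(M)$ meeting the irreducible locus is positive--dimensional, so each such component has image in $\G_m^2$ either one--dimensional --- contributing to $C_M$ --- or a single point, and there are only finitely many of the latter; hence the Zariski closure of the set of eigenvalue pairs over which some non--abelian representation exists, with its zero--dimensional components removed, recovers $(C_M)_{\Fbar_p}$ for all but finitely many $p$, and the generic fibre cardinalities recover the multiplicities, and so $A_0^M\bmod p$. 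By the previous paragraph the same construction applied to $N$ yields $(C_N)_{\Fbar_p}$ transported by $\gamma$, so $A_0^M\equiv\gamma^{*}A_0^N\pmod p$ for almost all $p$. I expect this extraction step --- correctly discarding the reducible and the restriction--collapsed loci, separating zero-- from one--dimensional closures uniformly in $p$, and tracking multiplicities and eigenvalue fields of definition --- to be the main obstacle, and it is where hyperbolicity of $M$ and $N$ is genuinely used.

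It remains to descend $\gamma$ from $\GL_2(\Zhat)$ to $\GL_2(\Z)$, so that the identity really respects the peripheral structure. The longitude direction is characterized inside $\pi_1(T_M)$ as the kernel of the map to $H_1(M;\Q)\cong\Q$; this is visible through $\widehat{H_1(M)}$, so $\gamma$ carries the procyclic subgroup generated by $\lambda_M$ onto that generated by $\lambda_N$, whence in the $(\mu,\lambda)$--bases $\gamma$ has the form $\lambda\mapsto u\lambda$, $\mu\mapsto a\mu+b\lambda$ with $u,a\in\Zhat^{\times}$ and $b\in\Zhat$. In eigenvalue coordinates $\gamma^{*}A_0^N$ is then obtained from $A_0^N$ by raising $l$ and $m$ to profinite powers read off from $u,a,b$; for $\gamma^{*}A_0^N$ to be, as it must, a genuine element of $\Z[l^{\pm1},m^{\pm1}]$ equal to $A_0^M$, one needs $u,a\in\{\pm1\}$ and $b\in\Z$, using that for a hyperbolic $M$ the factors of $A_0^M$ are not binomials $l^{\pm1}m^{k}\mp1$ --- which would be the only exception, and which do not occur because the geometric component carries the non--rigid holonomy of the complete structure. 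Hence $\gamma\in\GL_2(\Z)$, it is induced by a homeomorphism of the peripheral tori, and $A_0^M(l,m)=A_0^N(l,m)$ in the matched meridian--longitude coordinates.
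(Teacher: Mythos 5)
Your overall skeleton --- prove the congruence $A_0^M\equiv A_0^N \pmod p$ for almost every prime and conclude because two integer Laurent polynomials congruent modulo infinitely many primes coincide --- is exactly the paper's strategy, and your observation that $\Phi$ transports $\SL_2(\F_q)$-representations (because finitely generated subgroups of $\SL(2,\Fbar_p)$ are finite) is the paper's Lemma~\ref{replemma}. However, there are two genuine gaps.

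First, and most seriously, you never establish the link between $A_0^M\bmod p$ and the reduced characteristic-$p$ eigenvalue curve. Equality of the reduced curves $(C_M)_{\Fbar_p}$ and $(C_N)_{\Fbar_p}$ determines the reductions of $A_0^M$ and $A_0^N$ only up to radicals and up to components created or destroyed by specialization. The paper needs two inputs precisely here: \cite[Lemma 40]{ptilmanngarden}, which says that for almost all $p$ one has $A_0\bmod p=g\cdot A_p$ with $g$ a product of factors already occurring in $A_p$, and Lemma~\ref{lem:polynomial_reduction}, a spreading-out argument showing that a polynomial reduced over $\Qbar$ remains reduced modulo almost every prime, which forces $g=1$. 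Your substitute --- that ``the generic fibre cardinalities of $E_M\to\G_m^2$ recover the multiplicities'' --- is not correct: a multiple factor of $A_0\bmod p$ is an artifact of reduction modulo $p$ (as in $x^2-2\equiv x^2\pmod 2$) and has nothing to do with the cardinality of the fibres of the eigenvalue map, which are generically larger than one even when the reduction is squarefree. Without this step you cannot pass from equality of curves in characteristic $p$ to the congruence of polynomials, and your own flagged ``technical heart'' (extracting the curve uniformly in $p$ from fibrewise counts) is also left unresolved, whereas the paper sidesteps counting entirely: the bijection of representation sets preserves peripheral eigenvalues on the nose, so the images of the eigenvalue maps are literally equal.

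Second, your descent of the peripheral identification from $\GL_2(\Zhat)$ to $\GL_2(\Z)$ takes a different and incomplete route. The paper does not argue from the shape of $A_0$; it invokes Xu's structure theorem (Lemma~\ref{xustructuretheorem}), which already supplies a genuine isomorphism $\psi\colon\pi_1(\partial M)\to\pi_1(\partial N)$ of discrete groups with $m_\mu\circ\psi=C_g\circ\Phi$ for a single scalar $\mu\in\Zhat^\times$, and regularity forces $\mu=\pm1$; ``respects the peripheral structure'' then just means the polynomials are compared in the bases $\langle m,l\rangle$ and $\langle\psi(m),\psi(l)\rangle$. Your alternative --- deducing $u,a\in\{\pm1\}$ and $b\in\Z$ from the requirement that $\gamma^*A_0^N$ lie in $\Z[l^{\pm1},m^{\pm1}]$ --- runs into the problem that profinite exponents act only on the finite-order elements of $\Fbar_p^\times$, so each prime constrains $u,a,b$ only modulo the orders of the relevant roots of unity; assembling these congruences into genuine integrality would require a further argument you have not supplied. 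You are also implicitly assuming the existence of the peripheral correspondence itself (that $\Phi$ carries $\overline{\pi_1(\partial M)}$ to a conjugate of $\overline{\pi_1(\partial N)}$), which is a nontrivial theorem the paper cites from \cite{xuDehnfillings}.
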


\begin{comment}
    There are lots of cusped hyperbolic 3-manifolds now known to be profinitely rigid (i.e. determined by their profinite completions) among 3-manifold groups. At the moment, the figure 8 knot complement \cite{BR}, other once-punctured torus bundles \cite{BRW}, hyperbolic four-punctured sphere bundles \cite{CW}, and now infinitely many link complements in $S^3$ \cite{xuDehnfillings,Huang} are known to be profinitely rigid among 3-manifolds. 

\end{comment}
The definition of the $A$-polynomial depends on the choice of a basis of the peripheral subgroup of $\pi_1(M)$. We say that the above identification {\it respects the peripheral structure} if there is an induced isomorphism $\psi:\pi_1(\partial M)\to \pi_1(\partial N)$ so that the $A$-polynomials are considered under bases $\langle l,m\rangle$ of $\pi_1(\partial M)$ and $\langle \psi(l),\psi(m)\rangle$ of $\pi_1(\partial M)$.
\medbreak
An isomorphism $\Phi:\widehat{\Gamma}\to\widehat{\Delta}$ between the profinite completions of two finitely generated, residually finite groups $\Gamma$ and $\Delta$ gives an isomorphism $\Phi_{ab}:\widehat{H}_1(\Gamma,\Z)\to \widehat{H}_1(\Delta,\Z)$ between the completions of their abelianizations. Boileau-Friedl \cite{BF} introduced the notion of a {\it regular} isomorphism to mean a profinite isomorphism $\Phi:\widehat{\Gamma}\to\widehat{\Delta}$ such that $\Phi_{ab}$ is induced by some isomorphism of the discrete groups $h:H_1(\Gamma,\Z)\to H_1(\Delta,\Z)$. They showed that regular isomorphisms only exist between profinite completions of knot complements in $S^3$ that have the same Alexander polynomial. 
\medbreak
\begin{remark}
    After completing this work, we learned that Xiaoyu Xu has established the regularity of profinite isomorphisms for cusped hyperbolic 3-manifold groups. Xu's work reproves Theorem 1.1 unconditionally for knots in the 3-sphere. 
\end{remark}
\medbreak
In his seminal work, Liu \cite{Y} showed that any profinite isomorphism $\Phi:\hat{\Gamma}\to\hat{\Delta}$ for $\Gamma,\Delta$ finite covolume Kleinian groups is {\it $\hat{\Z}-$regular}, meaning in this case that $\Phi_{ab}=m_\mu\circ \hat{h}$ where $m_\mu$ is the multiplication by a unit $\mu\in \hat{\Z}^\times$ and $h:H_1(\Gamma,\Z)\to H_1(\Delta,\Z)$ is an isomorphism. Using $\hat{\Z}-$regularity, Liu showed that every profinite isomorphism $\Phi:\hat{\Gamma}\to\hat{\Delta}$ of finite covolume Kleinian groups induced a Thurston-norm preserving linear isomorphism on cohomology. 

\medbreak

The main ingredient of this paper is that there is an identification of the $SL(2,\bar{\F}_p)$ representations for finitely generated residually finite groups with the same profinite completion. Together with the peripheral correspondence of the torus cusps studied in \cite{xuDehnfillings}, we show that the mod $p$ $A$-polynomials are the same under regular isomorphisms on profinite completions for all $p$. Then, we improve a result in \cite{ptilmanngarden} which connects Culler-Shalen theory and $A$-polynomials over characteristic $0$ and $p$. 
\medbreak
Based on this, we show that the peripheral correspondence in \cite{xuDehnfillings} restricts to the set of strongly detected slopes.
\begin{theorem}\label{thm_B:detected_slopes}
    Let $N_1$ be a 1-cusped hyperbolic 3-manifold, and let $N_2$ be a 3-manifold with $\Phi:\widehat{\pi_1(N_1)}\cong\widehat{\pi_1(N_2)}$ a regular isomorphism. Then there is a bijection between the strongly detected boundary slopes of $N_1$ and $N_2$. The Xu (Theorem A \cite{xuDehnfillings}) correspondence of slopes restricts to this bijection on strongly detected boundary slopes. 
\end{theorem}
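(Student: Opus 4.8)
The plan is to deduce the theorem from two facts established earlier in the paper: first, that a regular profinite isomorphism matches up the $A$-polynomials $A^{N}_{\kk}$ in \emph{every} characteristic, $\kk\in\{\C\}\cup\{\bar{\F}_p:p\text{ prime}\}$, via the peripheral correspondence; and second, an improved form of the comparison of \cite{ptilmanngarden}, identifying the strongly detected boundary slopes of a one-cusped hyperbolic $3$-manifold $N$ with the slopes read off from the edges of the Newton polygons of the $A^{N}_{\kk}$, as $\kk$ ranges over these fields. Granting both, the theorem is essentially formal: the peripheral correspondence takes matched bases to matched bases, hence takes each Newton polygon to the corresponding one by a $\GL_2(\Z)$ change of coordinates, hence takes edges to edges, hence strongly detected slopes to strongly detected slopes.

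In more detail, I would first record that $\Phi$ forces $N_2$ to be a one-cusped finite-volume hyperbolic $3$-manifold --- hyperbolicity, asphericity and the number of cusps being profinite invariants here (cf.\ \cite{Y}), and in any case this is subsumed by Xu's setup in \cite{xuDehnfillings}. Xu's Theorem A then supplies an isomorphism $\psi\colon\pi_1(\partial N_1)\to\pi_1(\partial N_2)$ compatible with $\Phi$ and inducing the slope correspondence of \cite{xuDehnfillings}; regularity of $\Phi$ is exactly what makes the induced identification of peripheral first homology \emph{integral}, rather than $\widehat{\Z}$-linear only up to a unit, so that the eigenvalue coordinates on the cusp are matched with no spurious rescaling. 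Picking bases $\langle l_1,m_1\rangle$ of $\pi_1(\partial N_1)$ and $\langle l_2,m_2\rangle=\langle\psi(l_1),\psi(m_1)\rangle$ of $\pi_1(\partial N_2)$, \Cref{thm_A:A_poly} together with its characteristic-$p$ analogues --- proved by the same argument, with the $SL(2,\C)$-representation variety replaced by the $SL(2,\bar{\F}_p)$-representation variety and its profinite invariance --- gives
\[
A^{N_1}_{\kk}(l,m)=A^{N_2}_{\kk}(l,m)\qquad\text{in }\kk[l^{\pm 1},m^{\pm 1}]
\]
in the matched bases, for every $\kk$. Hence $\psi$ carries $\mathrm{Newt}(A^{N_1}_{\kk})$ onto $\mathrm{Newt}(A^{N_2}_{\kk})$ by the associated element of $\GL_2(\Z)$, and in particular sends the boundary slope attached to each edge of the former polygon to that attached to the corresponding edge of the latter.

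It then remains to invoke the improved comparison with Culler--Shalen theory: a slope $\gamma$ on $\partial N_i$ is a strongly detected boundary slope of $N_i$ if and only if it is the slope attached to an edge of $\mathrm{Newt}(A^{N_i}_{\kk})$ for some $\kk\in\{\C\}\cup\{\bar{\F}_p\}$. Combining with the previous step, $\psi$ restricts to a bijection between the strongly detected boundary slopes of $N_1$ and of $N_2$; since $\psi$ is what induces the Xu slope correspondence, that correspondence restricts to the asserted bijection.

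The main obstacle is the improved comparison statement itself. One must account for \emph{every} strongly detected slope by a Newton-polygon edge in some characteristic; the delicate case is a slope detected at an ideal point of a curve in $X(N_i)$ whose image in the eigenvalue variety is a single point --- so the boundary restriction degenerates and the characteristic-$0$ $A$-polynomial records nothing --- which is precisely where passage to positive characteristic should be needed. Alongside this one has to verify that reduction modulo $p$ preserves the relevant component of the eigenvalue variety and that the Newton polygons of the $A^{N_i}_p$ are normalized compatibly enough to be compared edge by edge. By contrast, the characteristic-$p$ invariance of the $A^{N_i}_{\kk}$ and the Xu peripheral dictionary should already be available from the earlier sections.
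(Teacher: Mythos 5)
Your proposal follows essentially the same route as the paper: transport the $A$-polynomials in every characteristic through Xu's peripheral isomorphism (regularity forcing $\mu=\pm1$ so the eigenvalue coordinates match without rescaling), then read strongly detected slopes off the Newton polygons. The only adjustment is that the ``improved comparison'' you flag as the main remaining obstacle is exactly the already-cited \Cref{bslopesarebslopes} and \Cref{TG_slope_theorem} (from \cite{CCGLS} and \cite{ptilmanngarden}), and the correct quantifier there is ``for $\kk=\C$'' (equivalently, for all but finitely many $p$) rather than ``for some $\kk$'', since an edge of $\mathrm{Newt}(A_p)$ at one of the finitely many exceptional primes need not correspond to a slope strongly detected over $\C$; with that correction your argument is the paper's.
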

Using the eigenvalue variety defined in \cite{Tillmann_boundary_slopes}, we can generalise \Cref{thm_B:detected_slopes} to the multi-cusped case, see \Cref{multcusps} in Section 5. As an application, we use results of Ni and Zhang \cite{Ni_Zhang_detection} to show that a family of Eudave-Mu\~noz knots are distinguished from all compact 3-manifolds by regular profinite isomorphisms.
\begin{theorem}\label{NiZhangtheorem}
    Let $K = k(\ell_*,-1,0,0)$ be an Eudave-Mu\~noz knot with $l_*>1$ and let $M = S^3\backslash K$. If $\Phi:\widehat{\pi_1(N)}\cong\widehat{\pi_1(M)}$ is a regular isomorphism, then $M\cong N$. 
\end{theorem}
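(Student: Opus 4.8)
The plan is to promote the hypothesis to a statement about knots in $S^3$ with matched peripheral framings, transfer the strong-detection bijection of \Cref{thm_B:detected_slopes}, and then invoke the work of Ni and Zhang on these Eudave-Mu\~noz knots.

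First I would note that, since $\Phi$ is regular, $H_1(N;\Z)\cong H_1(M;\Z)\cong\Z$, and that $N$ is a one-cusped finite-volume hyperbolic $3$-manifold (the geometry and the number of cusps of a compact aspherical $3$-manifold are detected by the profinite completion of its fundamental group; this is by now standard, cf.\ the setup of \cite{xuDehnfillings}). Xu's peripheral correspondence (Theorem~A of \cite{xuDehnfillings}) then gives an isomorphism $\psi\colon\pi_1(\partial N)\to\pi_1(\partial M)$ with $\widehat{\pi_1(N(\gamma))}\cong\widehat{\pi_1(M(\psi(\gamma)))}$ for every slope $\gamma$ on $\partial N$. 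Writing $\mu$ for the meridian of $K$ and setting $\gamma_0=\psi^{-1}(\mu)$, the closed orientable $3$-manifold $N(\gamma_0)$ satisfies $\widehat{\pi_1(N(\gamma_0))}\cong\widehat{\pi_1(S^3)}=1$; as fundamental groups of closed $3$-manifolds are residually finite, $N(\gamma_0)$ is simply connected and hence homeomorphic to $S^3$. So $N\cong S^3\setminus K'$, where $K'$ is the core of the filling solid torus, and $\gamma_0$ is the meridian of $K'$. Since $\Phi$ is regular, $\psi$ carries the (null-homologous) longitude of $N$ to $\pm$ the longitude of $M$; hence, in the canonical framings, $\psi$ is the identity on slopes up to a global sign. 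As $M$ is unchanged by reflection, this sign ambiguity is harmless for the conclusion $N\cong M$, and I will suppress it.

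Next I would transfer the detected slopes. By \Cref{thm_B:detected_slopes} the strongly detected boundary slopes of $K'$ map under $\psi$ onto those of $K=k(\ell_*,-1,0,0)$, so (after the above normalization) the two knots have the same set of strongly detected boundary slopes; moreover $\widehat{\pi_1(K'(r))}\cong\widehat{\pi_1(K(r))}$ for every slope $r$. I would then invoke the analysis of Ni and Zhang \cite{Ni_Zhang_detection}: their study of the half-integral toroidal surgery on $k(\ell_*,-1,0,0)$ shows the toroidal slope $r(\ell_*)$ is strongly detected, and that such a knot is recovered from its strongly detected boundary slopes together with the (now profinitely controlled) homeomorphism type of the surgered manifold $K(r(\ell_*))$. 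This forces $K'=K$, so $N\cong S^3\setminus K=M$.

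I expect the last step to be the real obstacle: extracting from \cite{Ni_Zhang_detection} an input of exactly the right strength. Concretely, one wants (i) that the Eudave-Mu\~noz toroidal manifold $K(r(\ell_*))$ --- a graph manifold assembled from two small Seifert fibered pieces --- is profinitely rigid, so that the profinite isomorphism $\widehat{\pi_1(K'(r))}\cong\widehat{\pi_1(K(r))}$ can be upgraded to a homeomorphism; this requires running the gluing data through Wilkes's classification of the profinite genus of graph manifolds and checking that this family avoids the non-rigid pairs. And one wants (ii) that $r(\ell_*)$ is a characterizing slope for $k(\ell_*,-1,0,0)$ when $\ell_*>1$ --- equivalently, that the surgery dual knot is determined inside $K(r(\ell_*))$ --- so that $K'(r)\cong K(r)$ yields $K'=K$; this, or an equivalent statement about strongly detected slopes, should be what \cite{Ni_Zhang_detection} supplies. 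By comparison, the reduction in the second paragraph is routine given \cite{xuDehnfillings}, residual finiteness of $3$-manifold groups, and the Poincar\'e conjecture, and the transfer of detected slopes is immediate from \Cref{thm_B:detected_slopes}.
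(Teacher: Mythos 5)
The reduction in your second paragraph (that $N$ is a hyperbolic knot complement in $S^3$ with matched meridian--longitude framing up to sign) is fine and matches the paper, which gets it in one stroke from \cite[Theorem C]{xuDehnfillings}. The problem is your final step. The input that \cite[Theorem 1.2]{Ni_Zhang_detection} actually takes is the $A$-polynomial itself together with the knot Floer homology (in fact only the Seifert genus) of the candidate knot; it is not a statement about characterizing slopes, nor about reconstructing $K$ from its set of strongly detected boundary slopes plus the homeomorphism type of the toroidal filling. You only transfer the \emph{strongly detected slopes} via \Cref{thm_B:detected_slopes}, which is strictly weaker information than the $A$-polynomial (it records only the slopes of the sides of the Newton polygon), and you never establish that $N$ has the same genus as $M$. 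So the hypotheses of Ni--Zhang's detection theorem are not verified, and the substitute route you sketch --- profinite rigidity of the graph manifold $K(r(\ell_*))$ via Wilkes plus $r(\ell_*)$ being a characterizing slope --- is a genuinely different and unproven program, as you yourself flag; neither ingredient is supplied by \cite{Ni_Zhang_detection}.

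The paper closes the gap differently and more directly: it invokes \Cref{pro:A_poly_profinite_inv} (Theorem A) to conclude that $M$ and $N$ have literally the same $A$-polynomial $A_0(l,m)$ in the matched framings, and it invokes Boileau--Friedl \cite{BF} to conclude that the Seifert genus is preserved under a regular profinite isomorphism. With both of these in hand, \cite[Theorem 1.2]{Ni_Zhang_detection} applies verbatim and yields $N\cong M$. To repair your argument, replace the slope-transfer step by the full $A$-polynomial equality from Theorem A (which you never use, even though it is the main tool the paper develops for exactly this application) and add the genus invariance from \cite{BF}.
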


\subsection{Organization}
Section 2 introduces preliminaries regarding profinite completion, representation variety and $A$-polynomial. In section 3, we prove that there is an isomorphism on the $SL(2,\overline{\mathbb{F}}_p)$ character variety given profinite isomorphism. Section 4 is devoted to prove \Cref{thm_A:A_poly} and \Cref{thm_B:detected_slopes}. In section 5, we show a multi-cusp version of \Cref{thm_B:detected_slopes}. We show some profinitely rigid examples using the previous sections in the last section.

\subsection{Acknowledgements}
The authors thank David Bai for suggesting the proof of \Cref{lem:polynomial_reduction}. The second author thanks Yair Minsky for helpful discussion.

\section{Preliminaries}

\subsection{Profinite completions of discrete groups}
For a countable, finitely generated, residually finite group $\Gamma$, the profinite completion $\hat{\Gamma}$ is the inverse limit of the inverse system of finite quotients of $\Gamma$. There is a natural injective homomorphism $\Gamma\hookrightarrow\hat{\Gamma}$, and for every homomorphism of discrete groups $\phi:\Gamma\to\Delta$ there is a natural extension $\hat{\phi}:\hat{\Gamma}\to\hat{\Delta}$ that fits into the commutative diagram
\[
\begin{tikzcd}
  \Gamma \arrow[r, "\phi"] \arrow[d]
    & \Delta \arrow[d] \\
  \hat{\Gamma} \arrow[r, "\hat{\phi}"]
&  \hat{\Delta} \end{tikzcd}
\]
where the vertical maps are canonical injections of the groups into their profinite completions. For every finite group $G$, extension gives a bijection between $\Hom(\Gamma,G)$, the set of homomorphisms from $\Gamma$ to $G$ and the set $\Hom(\hat{\Gamma},G)$, the set of abstract homomorphisms from $\hat{\Gamma}$ to $G$. As a consequence of a deep theorem of Nikolov-Segal \cite{ns}, all abstract homomorphisms from $\hat{\Gamma}$ to finite groups are continuous. 

\subsection{Character varieties}
Let $\Gamma$ be a finitely generated group with generators $\gamma_1,\cdots,\gamma_n$. Let $\bar{\F}_p$ be the algebraic closure of the finite field $\F_p$.
\medbreak
Fix an algebraically closed field $\F$. The $SL(2,\F)$-\textbf{representation variety} $R(\Gamma,\F) = \Hom(\Gamma, SL(2,\F))$ is the space of representations $\rho:\Gamma\to SL(2,\F)$. $R(\Gamma,\F)$ can be identified with its image under 
\begin{align*}
    R(\Gamma,\F) \to (SL(2,\F))^n\subset \F^{4n}\\
    \rho\mapsto (\rho(\gamma_1),\cdots,\rho(\gamma_n))
\end{align*}
relators of $\Gamma$ and determinant conditions of $SL(2,\F)$ give polynomial equations which define $R(\Gamma,\F)$ as an affine algebraic set.
\medbreak
The group $SL(2,\F)$ acts algebraically on $R(\Gamma,\F)$ by conjugation and we say that $\rho,\sigma\in R(\Gamma,\F)$ are closure-equivalent if the Zariski closures of their orbits intersect. We denote the quotient space of this equivalence relation $\sim$ by as 
\[X(\Gamma,\F) \coloneqq R(\Gamma,\F)/\sim \]
This is called the $SL(2,\F)$-\textbf{character variety} of $\Gamma$. $X(\Gamma,\F)$  is an affine algebraic set and its coordinate ring is generated by finitely many trace functions $tr_\gamma: X(\Gamma,\F)\to\F;\chi\mapsto \chi(\gamma)$.

\begin{remark}
    When $\Gamma = \pi_1(M)$ for a $3$-manifold $M$, we simplify the notation of $R(\Gamma,\F)$ and $X(\Gamma, \F)$ to $R(M,\F)$ and $X(M, \F)$ respectively for concision.
\end{remark}

\subsection{Boundary slopes}
We summarize relevant results regarding Culler-Shalen theory on constructing essential surfaces in \cite{CullerShalen} using $SL(2,\C)$ character varieties and introduced the notation of strongly detected slopes. See \cite{ptilmanngarden} for a detailed account for the case over $SL(2,\F)$.
\medbreak
Suppose that $M$ is an orientable, compact, irreducible $3$-manifold with torus boundary components $T_1,\cdots,T_n$. If an essential surface $S$ in $M$ has nonempty boundary, $\partial S$ intersects some $T_i$ with $n_i$ parallel copies of a simply closed curve $\alpha_i$ in $T_i$. This curve can be represented by a primitive element $\alpha_i = {p_i}\mathcal{M}_i+{q_i}\mathcal{L}_i$ for coprime $p_i,q_i\in\mathbb{Z}$. Then $\partial S$ can be identified by a point \[(n_1 p_1,n_1 q_1,n_2 p_2, n_2 q_2\cdots,n_h p_h, n_h q_h)\in\mathbb{Z}^{2h}-\{0\}\]
The above point is viewed as an element in $\mathbb{R}P^{2h-1}$. We further ignore the orientation of $\alpha_i$ in $T_i$, that is, 
${p_i}\mathcal{M}_i+{q_i}\mathcal{L}_i$ is identified with $-{p_i}\mathcal{M}_i-{q_i}\mathcal{L}_i$. This induces an action of $\mathbb{Z}_2^{h-1}$ on  $\mathbb{R}P^{2h-1}$. Note that $\mathbb{R}P^{2h-1}/\mathbb{Z}_2^{h-1}\cong S^{2h-1}$. For essential surfaces with nonempty boundary in $M$, their boundary curves are identified with the \textbf{projectivised boundary curve coordinates} \[[n_1 p_1,n_1 q_1,n_2 p_2, n_2 q_2\cdots,n_h p_h, n_h q_h]\in\mathbb{R}P^{2h-1}/\mathbb{Z}_2^{h-1}\]

When $M$ has a single boundary torus $T$ and $S$ is an essential surface in $M$ with nonzero boundary. We say that $\partial S$ determines a boundary slope $r = \frac{p}{q}\cup\{\infty\}$.
\medbreak

An ideal point $\xi$ of a curve in $X(M,\F)$ gives a field $\F$ with a discrete rank $1$ valuation. The group $SL(2,\F)$ acts on a Bass-Serre tree $T_\xi$ and we can pull back to an action of $\pi_1(M)$ using the tautological representation. Such action produces a dual essential surface $S$ via Stallings' construction in \cite{Stallings}. An essential surface $S$ which arises in this way is said to be detected by the ideal point $\xi$. 
\medbreak
When $S$ has nonempty boundary, the boundary slope $r$ of $S$ is called a \textbf{strongly detected slope} of $M$ (over $SL(2,\F)$) if there is such ideal point $\xi$ such that no closed essential surface in $M$ is detected by $\xi$.

\subsection{A-polynomials}
The A–polynomial was introduced in \cite{CCGLS} over $SL(2,\C)$ and investigated in \cite{ptilmanngarden} for the case over $SL(2,\bar{\F}_p)$. We recall the definition and some results here. 
\medbreak
Let $M$ be a compact $3$-manifold with boundary a single torus $T$ with meridian $\mathcal{M}$ and $\mathcal{L}$. We can identify $\mathcal{M}$ and $\mathcal{L}$ with their images under the inclusion $\pi_1(T)\hookrightarrow\pi_1(M)$. Let $R_U(M,\F)$ be the set of representations $\rho\in R(M,\F)$ such that $\rho(\mathcal{M})$ and $\rho(\mathcal{L})$ are upper-triangular matrices. This is a closed subset of $R_U(M,\F)$ and every representation in $R(M,\F)$ is conjugate to an element in $R_U(M,\F)$. For $\rho\in R_U(M,\F)$, let 
\begin{align}\label{eq:Upper_tri_A_poly}
    \rho(\mathcal{M}) = \begin{pmatrix}
    m& *\\
    0 & m^{-1}
    \end{pmatrix},\,\,\,\rho(\mathcal{L}) = \begin{pmatrix}
    l& *\\
    0 & l^{-1}
    \end{pmatrix}
\end{align}
\medbreak
The eigenvalue map is defined as 
\begin{align*}
    e: R_U(M,\F)&\to(\F-\{0\})^2\\
    \rho&\mapsto (m,l)
\end{align*}
The Zariski closure of the image of $e$ is called the $SL(2,\F)$ \textbf{eigenvalue variety} of $M$, which is denoted by $E_2(M)$. Deleting all $0$-dimensional components from $E_2(M)$, the remaining part is neccesarily generated by a single polynomial in $\F[m^{\pm1},l^{\pm1}]$, which is called the $A$-\textbf{polynomial} of $M$. If $\F = \C$ or $\F = \bar{\F}_p$, we denote the A-polynomial as $A_0^M(l,m)$ and $A_p^M(l,m)$ respectively. 

\begin{remark}
    The $A$-polynomials can always be chosen so that $A_0\in \Z[l,m]$ and $A_p\in\Z_p[l,m]$. See \cite[Lemma 36]{ptilmanngarden} for details.
\end{remark}
\medbreak
The $A$-polynomial carries topological properties for the detected essential surfaces from $X(M,\F)$. We note the following connection between the strongly detected slopes and sides of the Newton polygon of the $A$-polynomial.

\begin{lemma}[{\cite{CCGLS} for char $0$} and \cite{ptilmanngarden} for char $p$]\label{bslopesarebslopes}
    Let $\F$ be an algebraically closed field of characteristic $p$. The boundary slopes of the Newton polygon for $A_p^M$ are the strongly detected boundary slopes for $X(M,\F)$.
\end{lemma}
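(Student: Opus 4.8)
The plan is to reprove both inclusions of this set equality via the Culler--Shalen correspondence between ideal points of curves in $X(M,\F)$ and branches at infinity of the $A$-polynomial curve, carefully tracking the $v$-adic orders of the eigenvalue functions. First I would fix an irreducible component $\calC$ of the hypersurface $\{A_p^M = 0\} \subseteq (\F - \{0\})^2$. Since $\calC$ is one of the positive-dimensional components of $E_2(M)$, there is an irreducible curve $\widetilde{X}_0 \subseteq R_U(M,\F)$ whose image $X_0$ in $X(M,\F)$ is a curve on which the peripheral trace functions $\tr_{\mathcal{M}}, \tr_{\mathcal{L}}$ are non-constant, and whose image $e(\widetilde{X}_0)$ under the eigenvalue map is Zariski-dense in $\calC$. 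Replacing $\widetilde{X}_0$ by a smooth projective model, the ideal points at which $m$ or $l$ acquires a zero or a pole are precisely those mapping into the toric boundary of the closure $\overline{\calC}$ inside the toric surface attached to the Newton polygon of $A_p^M$.

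Next I would analyze a fixed such ideal point $\xi$ with its discrete valuation $v$: the tautological representation gives a $\pi_1(M)$-action without a global fixed point on the Bass--Serre tree $T_\xi$, and Stallings' construction yields a dual essential surface $S$. A peripheral class $a\mathcal{M} + b\mathcal{L}$ is conjugate into the upper-triangular subgroup with upper-left entry $m^a l^b$, so it acts elliptically on $T_\xi$ exactly when $a\,v(m) + b\,v(l) = 0$ and hyperbolically otherwise; hence the unique primitive peripheral class $p\mathcal{M}+q\mathcal{L}$ fixing a vertex of $T_\xi$ is the one with $p\,v(m) + q\,v(l) = 0$, and its slope is the boundary slope of $S$. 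Because $\tr_{\mathcal{M}}, \tr_{\mathcal{L}}$ are non-constant on $X_0$, at $\xi$ we have $(v(m), v(l)) \neq (0,0)$, so $\pi_1(\partial M) \cong \Z^2$ has no global fixed vertex in $T_\xi$; therefore $\partial S \neq \emptyset$ and no surface dual to this action is closed, so this boundary slope is strongly detected. To identify it with a side of the Newton polygon I would use the standard fact that a branch of $\overline{\calC}$ reaching the toric boundary must, in order for the lowest-order terms of $A_p^M$ to cancel along it, run into a one-dimensional stratum whose primitive inner normal $(\alpha,\beta)$ spans the inner normal ray of a side $E$ of $\mathrm{New}(A_p^M)$, and that along such a branch $(v(m), v(l))$ is a positive multiple of $(\alpha,\beta)$; combined with the previous sentence the boundary slope of $S$ is then the slope attached to $E$. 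This gives the inclusion ``slopes of sides of $\mathrm{New}(A_p^M)$ are strongly detected boundary slopes''.

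For the reverse inclusion, let $r$ be a strongly detected slope, witnessed by an ideal point $\xi$ of a curve $X_0 \subseteq X(M,\F)$ detecting an essential surface $S$ with $\partial S \neq \emptyset$ of slope $r$ and detecting no closed essential surface. Then some peripheral element acts hyperbolically on $T_\xi$, so $\tr_{\mathcal{M}}$ or $\tr_{\mathcal{L}}$ is non-constant on $X_0$; the eigenvalue map is therefore non-constant on a curve dominating $X_0$, and its image is an irreducible component $\calC$ of $\{A_p^M = 0\}$. Lifting $\xi$ to the smooth model, $(v(m), v(l)) \neq (0,0)$ there, so $\xi$ lands on the toric-boundary stratum attached to some side $E$ of $\mathrm{New}(A_p^M)$, and the computation above forces $r$ to equal the slope attached to $E$.

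The main obstacle, and the reason the statement is imported rather than reproved here, is the bookkeeping in the first and third steps: one must be sure that the ideal points of $\widetilde{X}_0$ at which the eigenvalues degenerate correspond, with the correct valuation data, exactly to the sides of $\mathrm{New}(A_p^M)$, and that nothing is disturbed by the possible non-reducedness of $R(M,\F)$ in characteristic $p$ or by the passage among $R_U(M,\F)$, $X(M,\F)$ and $E_2(M)$. These points are carried out in \cite{CCGLS} in characteristic zero and in \cite{ptilmanngarden} in characteristic $p$, which I would invoke for the toric input and the characteristic-$p$ subtleties rather than redo them.
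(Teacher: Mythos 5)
The paper does not prove this lemma at all: it is imported verbatim from \cite{CCGLS} (characteristic $0$) and \cite{ptilmanngarden} (characteristic $p$), so there is no in-paper argument to compare against. Your outline is a correct reconstruction of the standard proof from those sources --- components of $\{A_p^M=0\}$ pulled back to curves in $X(M,\F)$, ideal points giving valuations and Bass--Serre tree actions, ellipticity of $a\mathcal{M}+b\mathcal{L}$ governed by $a\,v(m)+b\,v(l)=0$, and the valuation vector lying on the inner normal ray of a side of the Newton polygon --- and you defer exactly the technical points (the toric bookkeeping and the characteristic-$p$ subtleties) to the same references the paper cites, so in effect you take the same route the paper does.
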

\medbreak
Based on the above lemma, the following lemma connects the strongly detected slopes over $SL(2,\C)$ and over $SL(2,\F)$.
\begin{lemma}[{\cite[Theorem 47]{ptilmanngarden}}]\label{TG_slope_theorem}
    Let $M$ be an orientable cusped finite-volume $3$-manifold. For all but finitely many primes $p$, the boundary curve of an essential surface in $M$ is strongly detected by $X(M,\F)$ for $\F$ an algebraically closed field of characteristic $p$ if and only if it is
strongly detected by $X(M,\C)$.
\end{lemma}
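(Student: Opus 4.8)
The plan is to reduce the statement, via \Cref{bslopesarebslopes}, to a comparison of Newton polygons, and then to prove that comparison by a spreading-out argument over $\Spec\Z$. Applying \Cref{bslopesarebslopes} in characteristic $p$, a slope is strongly detected by $X(M,\F)$ with $\Char\F = p$ exactly when it is the slope of a side of the Newton polygon $\operatorname{Newt}(A_p^M)$; applying it in characteristic $0$, a slope is strongly detected by $X(M,\C)$ exactly when it is the slope of a side of $\operatorname{Newt}(A_0^M)$. Since every strongly detected slope is by construction realized as the boundary slope of some essential surface in $M$, it therefore suffices to show that for all but finitely many primes $p$ the set of side slopes of $\operatorname{Newt}(A_p^M)$ coincides with the set of side slopes of $\operatorname{Newt}(A_0^M)$. (In the multi-cusped case one runs the identical argument with the eigenvalue variety $E_2(M)$ replacing the $A$-polynomial and its Newton polytope replacing the Newton polygon.)

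To set this up I would fix a finite presentation of $\pi_1(M)$ and form the representation scheme $\mathcal R_U$ over $\Z$, the scheme-theoretic version of $R_U(M,-)$, cut out by the relators, the $\det = 1$ conditions, and the upper-triangularity of the meridian and longitude — all with integer coefficients — so that $R_U(M,\F)$ is its set of $\F$-points. The eigenvalue map becomes a morphism $e\colon \mathcal R_U \to \G_m^2$ of finite-type $\Z$-schemes (coordinates $m,l$), and I let $\mathcal E \subset \G_m^2$ be the scheme-theoretic closure of its image. Over $\C$, deleting the $0$-dimensional components of $\mathcal E\times_\Z\C$ gives the vanishing locus of $A_0^M$, which may be taken in $\Z[l,m]$; over $\bar{\F}_p$, deleting the $0$-dimensional components of $\mathcal E\times_\Z\bar{\F}_p$ gives $A_p^M$ — \emph{provided} that reduction mod $p$ commutes with formation of the closure of the image.

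The crux is a generic-constancy statement: by Chevalley's theorem the image of $e$ is constructible, and by generic flatness there is a nonempty open $U\subseteq\Spec\Z$ over which $\mathcal E\to\Spec\Z$ is flat, its fibers have constant dimension and constant number of irreducible components of each dimension, and formation of the schematic image commutes with base change to $\bar{\F}_p$ for every $p\in U$. For such $p$ the positive-dimensional part of $\mathcal E\times_\Z\bar{\F}_p$ — which by definition is cut out by $A_p^M$ — then has the same underlying reduced curve as the mod-$p$ reduction $\overline{A_0^M}$ of $A_0^M$, so $A_p^M$ and $\overline{A_0^M}$ differ only by a unit, a monomial, and a change of component multiplicities; in particular $\operatorname{Newt}(A_p^M)$ and $\operatorname{Newt}(\overline{A_0^M})$ have the same side slopes, since passing to a Minkowski sum (or rescaling a polygon) does not change the set of edge slopes. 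Finally, only finitely many primes divide a given nonzero coefficient of $A_0^M$, so after discarding those we also have $\operatorname{Newt}(\overline{A_0^M}) = \operatorname{Newt}(A_0^M)$; combining, the side slopes of $\operatorname{Newt}(A_p^M)$ and $\operatorname{Newt}(A_0^M)$ agree for all but finitely many $p$, and \Cref{bslopesarebslopes}, applied in both directions, turns this into the claimed equivalence of strongly detected slopes.

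The main obstacle is precisely the italicized proviso: that passing to the $A$-polynomial — equivalently, to the positive-dimensional part of the eigenvalue variety — commutes with reduction mod $p$ outside a finite set of primes. One must rule out, for good $p$, both the appearance of spurious new positive-dimensional components of $E_2(M)$ over $\bar{\F}_p$ and the degeneration of a genuine characteristic-$0$ component, and one must know that these pathologies are confined to finitely many primes; this is exactly what the scheme-theoretic model over $\Z$, together with constructibility and generic flatness, is there to supply. Everything after that — tracking the Newton polygon under reduction and invoking \Cref{bslopesarebslopes} — is elementary bookkeeping.
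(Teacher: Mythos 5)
The paper offers no proof of this lemma: it is imported wholesale as \cite[Theorem 47]{ptilmanngarden}, so the only comparison available is with the strategy the paper relies on elsewhere. Measured against that, your outline is correct for the one-cusped case and is essentially the intended argument. \Cref{bslopesarebslopes} converts the claim into the statement that $\operatorname{Newt}(A_p^M)$ and $\operatorname{Newt}(A_0^M)$ have the same edge slopes for almost all $p$, and the proviso you single out --- that forming the positive-dimensional part of the eigenvalue variety commutes with reduction mod $p$ away from finitely many primes --- is exactly the content of \cite[Lemma 40]{ptilmanngarden}, which this paper quotes in the proof of \Cref{pro:A_poly_profinite_inv} in the form $\overline{A_0^M}=g\cdot A_p^M$ with $g$ a product of factors of $A_p^M$. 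Your spreading-out argument over $\Spec\Z$ (Chevalley, generic flatness, generic constancy of the dimension and of the number of components of the fibers) is the right way to establish that proviso and is the same technique the paper deploys in \Cref{lem:polynomial_reduction} to control reducedness of $\overline{A_0^M}$. The closing bookkeeping is also sound: two Laurent polynomials with the same set of irreducible factors have Newton polygons that are Minkowski sums drawn from the same collection of factor polygons, hence the same edge-slope set, and only the finitely many primes dividing a vertex coefficient of $A_0^M$ can change $\operatorname{Newt}(A_0^M)$ under reduction.

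The one genuine soft spot is the parenthetical dismissal of the multi-cusped case, which the lemma as stated does cover. For $h>1$ cusps the eigenvalue variety $E_2(M,\F)\subset(\F-\{0\})^{2h}$ is not a hypersurface, there is no single $A$-polynomial or Newton polytope, and detected boundary curves are read off from the logarithmic limit set $V_\infty(K_\F^M)$ of the defining ideal, as in Section 5 and \cite[Theorem 45]{ptilmanngarden}; the characteristic $0$ versus characteristic $p$ comparison must then be carried out for tropicalizations of ideals rather than for edge slopes of one polygon, which is parallel in spirit to your argument but is not literally ``the identical argument'' and needs its own write-up. If the lemma is restricted to one cusp --- which is the only setting in which the paper invokes it, namely in \Cref{onecusp} --- your proof is complete modulo supplying precise references for the standard spreading-out facts you name.
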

\medbreak
The eigenvalue variety, which is a generalization of the $A$-polynomial to $3$-manifolds with multiple boundary components, is treated in \cite{Tillmann_boundary_slopes} and \cite{ptilmanngarden}.
Given a presentation of $\pi_1(M)$ with generators $\gamma_1,\cdots,\gamma_n$, $R(M,\F)$ is generated by an ideal $J =\langle f_1,\cdots,f_k\rangle$ with $\Z_p[g_{11},..., g_{n4}]$. Recall that the trace function $I_\gamma(\rho) = \tr\rho(\gamma)$ is in $\Z_p[g_{11},..., g_{n4}]$ for any $\gamma\in\pi_1(M)$. In the ring $\Z_p[g_{11},..., g_{n4},m_1^{\pm 1},l_1^{\pm1},\cdots,m_h^{\pm1},l_h^{\pm1}]$, we define the following equations:
\begin{align*}
    I_{\mathcal{M}_{i}}&= m_i+m_i^{-1}\\
    I_{\mathcal{L}_{i}}&= l_i+l_i^{-1}\\ 
    I_{\mathcal{M}_i\mathcal{L}_i} &= m_il_i+m_i^{-1}l_i^{-1}
\end{align*}
Let $R_E(M,\F)$ be the algebraic set in $\F^{4n}\times \F^{2h}$ generated by $J$ together with above $3h$ equations. %Similarly as the $1$-cusped case, there is a solution to the above $3h$ equations for every $\rho\in R(M,\F)$.
%Hence the natural projection map $r$ from $R_E(M,\F)$ to $R(M,\F)$ is surjective. Inverting a tuple $(m_i,l_i)$ to $(m_i^{-1},l_i^{-1})$ gives an $\mathbb{Z}_2^h$ action on the fibers of $r_2$. Hence $r_2$ is finite-to-one with degree less than $2^h$.
\begin{defn}
The $SL(2,\mathbb{\F})$-\textbf{eigenvalue variety} $E_2(M,\F)$ of $M$ is the Zariski closure of the image of $R_E(M,\F)$ under the projection map
\begin{align*}
p: R_E(M,\F)&\to (\F-\{0\})^{2h}\\
    (g_{11},..., g_{n4},m_1,\cdots,l_h)&\mapsto (m_1,\cdots,l_h)
\end{align*}
$E_2(M,\F)$ is defined by an ideal $K_\F^M \in \F[m_1,l_1,\cdots,m_h,l_h]$.
\end{defn}

\subsection{The logarithmic limit set of a character variety}\label{subsec:log_limit_set}
The boundary slopes detected by the character variety are recovered by the \textbf{logarithmic limit set} of the eigenvalue variety. We list three definitions following \cite{B_logarithmic_limit_set}.
\medbreak
Let $I$ be an ideal in $\F[x_1^{\pm},\cdots,x_n^{\pm}]$ and  $V = V(I)$ be the variety in $(\F\backslash\{0\})^n$. 
\begin{enumerate}
    \item An absolute value function on $\F$ is a function $|\cdot|: \F\to\R_{\geq 0}$ such that $|0| = 0$, $|1| = 1$, $|xy| = |x||y|$,
$|x+y|\leq|x|+|y|$ and there exists $x\in \F$ such that $0 < |x| < 1$. $V_{log}$ is the set of limit points on $S^{n-1}$ of the set 
    \[\Bigl\{\dfrac{(\log |x_1|,\cdots,\log |x_n|)}{\sqrt{1+\sum_{i=1}^n (\log |x_i|)^2}}\mid (x_1,\cdots,x_n)\in V\Bigr\}\]
    \item The \textbf{tropical variety} $V_{val}$ is the set of $n$-tuples
    \[ (-v(x_1),\cdots,-v(x_n))\]
    where $v$ is any real-valued valuation on $\mathbb{\F}[x_1^{\pm},\cdots,x_n^{\pm}]/I$ such that $\sum_{i=1}^n v(x_i)^2 = 1$.
    \item Let the support $s(f)$ of $f\in \mathbb{\F}[x_1^{\pm},\cdots,x_n^{\pm}]$ to be set of points $\alpha =(\alpha_1,\cdots,\alpha_n)\in \mathbb{Z}^n$ such that the monomial $X^\alpha = X^{\alpha_1}\cdots X^{\alpha_n}$ appears in $f$. Let $V_{sph}$ be the set of points $\xi\in S^{n-1}$ such that for any non-zero $f\in I$, the maximal value of $\xi\cdot \alpha$ is achieved at least twice. Geometrically, 
    \[V_{sph} = \cap_{f\neq0\in I}\text{Sph}(\text{Newt}(f))\] is the intersection of spherical duals of the Newton polytopes of all nonzero elements $f$ in $I$.
\end{enumerate}
\medbreak
It is shown in \cite{B_logarithmic_limit_set} that $V_\infty  = V_{val} = V_{sph}$ and $V_{log}\subseteq V_{\infty} $ is a closed subset.
\medbreak
We denote $V_\infty(K_\F^M)$ as the logarithmic limit set of the eigenvalue variety $E_2^p(M)$. By symmetry, for any ideal point $(m_1,l_1,\cdots,m_i,l_i,\cdots,m_h,l_h)\in E_2^p(M)$, clearly the point $(m_1,l_1,\cdots,m_i^{-1},l_i^{-1},\cdots,m_h,l_h)$ is also in $E_2^p(M)$. Hence $(x_1,x_2,\cdots,x_{2i-1},x_{2i},\cdots ,x_{2h-1},x_{2h})$ is in $V_\infty(K_\F^M)$ implies that \linebreak $(x_1,x_2,\cdots,-x_{2i-1},-x_{2i},\cdots x_{2h-1},x_{2h})$ is in $V_\infty(K_\F^M)$. This induces a quotient map \[\mathbb{R}P^{2h-1}\to \mathbb{R}P^{2h-1}/\mathbb{Z}_2^{h-1}\cong S^{2h-1}\]
and extends to a map $\Psi: S^{2h-1}\to S^{2h-1}$. Let 
\[ T = \begin{pmatrix} 0 & 1\\ -1 & 0\end{pmatrix}\]
and $T_h$ be the block diagonal matrix consisting of $h$ copies of $T$.

\subsection{Dehn fillings}
Let $M$ be a irreducible, orientable compact $3$-manifold with boundary components consisting of tori $\partial_1 M, \partial_2 M,\cdots,\partial_n M$. Let $c_i$ be a slope of $\partial_i M$ for $1\leq i\leq n$ with empty slopes allowed. We denote the Dehn filled manifold obtained by filling $\partial_iM$ along $c_i$ as $M_{c_i}$. The main result in \cite{xuDehnfillings} concerning the profinite completions of the Dehn-filled manifolds is as follows.

\begin{lemma}[{\cite[Theorem A]{xuDehnfillings}}]\label{xustructuretheorem}
    Suppose $M$ and $N$ are orientable cusped finite-volume hyperbolic 3-manifolds and $\Phi:\widehat{\pi_1(M)}\to\widehat{\pi_1(N)}$ is a profinite isomorphism. Then
    \begin{enumerate}
        \item $\Phi$ respects the peripheral structure, i.e. there is a one-to-one correspondence between the boundary components of $M$ and $N$, which we simply denote as $\partial_iM\leftrightarrow\partial_iN$, so that $\Phi(\overline{\pi_1(\partial_iM}))$ is a conjugate of $\overline{\pi_1(\partial_iN)}$ in $\widehat{\pi_1(N)}$.
        \item Under the correspondence between the cusps $\partial_i M$ of $M$ and $\partial_{i}N$ of $N$, there is an isomorphism $\psi_i:\pi_1\partial_i M\to \pi_1\partial_i N$ such that, for any boundary slopes $(c_i)$ on $\partial M$ (allowing empty slopes), there is an isomorphism $\widehat{\pi_1(M_{c_i}})\cong\widehat{\pi_1(N_{\psi(c_i)})} $ that respects the peripheral structure.
        \item At the level of fundamental group, $\exists g_i\in\widehat{\pi_1(N)}$ and a profinite unit $\mu_i\in\hat{\Z}$ such that $m_{\mu_i}\circ\psi_i=  C_{g_i}\circ\Phi:\overline{\pi_1(\partial_iM)}\to\overline{\pi_1(\partial_iN)}$ where $m_{\mu_i}$ is multiplication by $\mu_i$ and $C_{g_i}$ is conjugation by $g_i$.
    \end{enumerate}
\end{lemma}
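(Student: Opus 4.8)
The plan is to prove the three assertions in order, deriving the Dehn filling statement (2) from the peripheral correspondence (1) together with the scalar rigidity (3). For (1): finitely generated $3$-manifold groups are good in the sense of Serre, and $(\pi_1 M,\{\pi_1\partial_i M\})$ is an orientable relative $\mathrm{PD}_3$-pair, so this duality structure together with the geometric decomposition is recorded by the profinite completion. Concretely, I would pass to the double $DM = M\cup_{\partial M}M$, whose JSJ graph is a single edge with edge groups exactly the peripheral $\Z^2$'s, and invoke Wilton--Zalesskii's theorem that profinite completions detect JSJ/geometric decompositions to conclude that $\Phi$ sends each $\overline{\pi_1(\partial_i M)}$ to a conjugate of some $\overline{\pi_1(\partial_j N)}$; since the number of cusps is itself a profinite invariant, this correspondence is a bijection. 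Much of this is in any case available in the existing literature on profinite invariance of the peripheral structure of cusped hyperbolic manifolds.

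Part (3) is the heart of the matter. After conjugating inside $\widehat{\pi_1(N)}$ so that $C_{g_i}\circ\Phi$ maps $\overline{\pi_1(\partial_i M)}$ onto $\overline{\pi_1(\partial_i N)}$, the restriction is an isomorphism $\phi_i\in\GL(2,\Zhat)$, and I want $\phi_i = m_{\mu_i}\circ\psi_i$ with $\mu_i\in\Zhat^\times$ and $\psi_i\in\GL(2,\Z)$. The ``homologically visible'' half is immediate from Liu's $\Zhat$-regularity: writing $\Phi_{\mathrm{ab}} = m_\mu\circ\widehat h$ with $h\colon H_1(M;\Z)\to H_1(N;\Z)$ discrete and $\mu\in\Zhat^\times$, and chasing through the maps $\pi_1(\partial_i M)\to H_1(M;\Z)$ and their duals, one sees that $\phi_i$ already agrees with $m_\mu$ times a discrete map modulo the ``longitudinal'' directions (the ones dying in rational homology, which by half-lives-half-dies make up half of the total cusp homology). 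The remaining, genuinely hard, step is to control $\phi_i$ along these longitudinal directions, which are invisible to $\Phi_{\mathrm{ab}}$; this is where I expect the \emph{main obstacle} to lie. One route is geometric: by Thurston's hyperbolic Dehn surgery all but finitely many slopes $c$ on $\partial_i M$ yield closed hyperbolic $M_c$, whose volume (equivalently, normalized $L^2$-torsion) is a profinite invariant, and the way $\mathrm{vol}(M_c)$ depends on $c$ recovers the Euclidean shape of the cusp; comparing with the analogous data for $N$, and using the elementary fact that a $\GL(2,\Zhat)$-isomorphism carrying the (dense) set of discrete primitive lines to discrete primitive lines must be a scalar times an element of $\GL(2,\Z)$, forces $\phi_i = m_{\mu_i}\circ\psi_i$ for a single discrete $\psi_i$. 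An alternative, purely profinite route is to exploit that $\widehat{\pi_1(N)}/\overline{\langle\langle\ell\rangle\rangle}$ for a non-discrete ``profinite slope'' $\ell$ fails the profinite duality/goodness properties that a genuine $3$-manifold group completion must satisfy; either way this is the crux.

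Granting (1) and (3), assertion (2) is formal. Set $\psi=\prod_i\psi_i$. For any tuple of slopes $(c_i)$, van Kampen gives $1\to K\to\pi_1(M)\to\pi_1(M_{(c_i)})\to 1$ with $K$ the normal closure of the $c_i$ (empty slopes contributing no relation); since profinite completion is right exact, $\widehat{\pi_1(M_{(c_i)})}=\widehat{\pi_1(M)}/\overline K$, where $\overline K$ is the closed normal subgroup generated by the images of the $c_i$, and likewise for $N$. By (3), for each $i$ one has $\phi_i(\overline{\langle c_i\rangle}) = m_{\mu_i}(\Zhat\cdot\psi_i(c_i)) = \mu_i\Zhat\cdot\psi_i(c_i) = \Zhat\cdot\psi_i(c_i) = \overline{\langle\psi_i(c_i)\rangle}$, using $\mu_i\in\Zhat^\times$; hence $\Phi$ carries $\overline K$ onto the corresponding closed normal subgroup for $N_{\psi(c_i)}$ and descends to an isomorphism $\widehat{\pi_1(M_{(c_i)})}\cong\widehat{\pi_1(N_{\psi(c_i)})}$. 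That this descended map still respects the peripheral structure on the unfilled cusps is inherited from (1). Taking all slopes empty recovers (1), and letting $(c_i)$ vary over all slope tuples yields the asserted correspondence of fillings.
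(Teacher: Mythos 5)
First, note that the paper does not prove this statement at all: it is imported verbatim as \cite[Theorem A]{xuDehnfillings}, so the ``paper's own proof'' is a citation, and what you have written is an attempt to reprove Xu's theorem from scratch. Your treatment of (1) is essentially the known argument (goodness, the relative $\mathrm{PD}_3$ structure, and Wilton--Zalesskii's detection of the peripheral/JSJ structure), and your derivation of (2) from (1) and (3) via right-exactness of profinite completion and the identity $\overline{\langle c_i^{\mu_i}\rangle}=\overline{\langle c_i\rangle}$ for $\mu_i\in\Zhat^\times$ is correct and is how this implication goes.

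The genuine gap is exactly where you locate the crux, in (3). Your primary route rests on the claim that the volume (equivalently, normalized $L^2$-torsion) of the closed hyperbolic fillings $M_c$ is a profinite invariant; this is not a theorem --- profinite invariance of hyperbolic volume for closed $3$-manifolds (and Lück-approximation of $L^2$-torsion along arbitrary chains) is a well-known open problem, so the step ``compare $\mathrm{vol}(M_c)$ with $\mathrm{vol}(N_{\phi_i(c)})$ to recover the cusp shape'' cannot be carried out. Moreover, to even speak of $M_c$ versus $N_{\phi_i(c)}$ you would already need to know that $\phi_i$ sends discrete slopes to discrete slopes, which is part of what is being proved. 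Your alternative ``purely profinite'' route (non-discrete profinite slopes destroy duality/goodness of the quotient) is too vague to assess and is not known to work. The way the longitudinal directions are actually controlled is homological, not geometric: since cusped hyperbolic $3$-manifold groups are virtually special, each peripheral subgroup is a virtual retract, so there is a finite cover $M'\to M$ in which $\pi_1(\partial_i M)$ injects into $H_1(M';\Z)$ modulo torsion; applying Liu's $\Zhat$-regularity to the induced isomorphism $\widehat{\pi_1(M')}\cong\widehat{\pi_1(N')}$ of the corresponding covers then pins down the restriction of $\Phi$ on all of $\overline{\pi_1(\partial_i M)}$, not just on its homologically visible half, as a unit multiple of an integral isomorphism. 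Without this (or some substitute for it), your proof of (3) --- and hence of (2) --- does not close.
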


\section{Detecting the characteristic $p$ representation variety}
For the rest of this discussion, $\F$ will be an algebraically closed field with prime characteristic $p$. The following lemmas are well known (see \cite[Lemma 5.8, 5.9]{rapoport}, for example). We include the proofs for convenience.
\begin{lemma}\label{replemma}
    For $\Gamma,\Delta$ finitely generated, residually finite groups with $\hat{\Gamma}\cong\hat{\Delta}$, then $R(\Gamma,\F)=R(\Delta,\F)$ as sets.
    \begin{proof}
        Fix an identification $\Phi:\hat{\Gamma}\to\hat{\Delta}$. The image of a finitely generated group in $SL(2,\F)$ is finite. Thus, every representation $\rho:\Gamma\to SL(2,\F)$ has a unique extension $\hat{\rho}:\hat{\Gamma}\to SL(2,\F)$. The homomorphism $\hat{\rho}\circ \Phi^{-1}$ restricted to $\Delta$ has the same image as $\rho$ in $SL(2,\F)$. Thus $R(\Gamma,\F)\subset R(\Delta,\F)$. For the reverse inclusion, given a homomorphism $\varphi:\Delta\to SL(2,\F)$, $\hat{\varphi}\circ\Phi$ restricted to $\Gamma$ is a homomorphism with the same finite image as $\varphi$ since $\Delta<\hat{\Delta}\cong\hat{\Gamma}$ is dense. 
 \end{proof}   
 \end{lemma}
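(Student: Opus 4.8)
The plan is to exploit the fact that over $\F$ every finitely generated subgroup of $SL(2,\F)$ is finite, so that $SL(2,\F)$-representations of a group factor through its finite quotients; since finite quotients are precisely what the profinite completion records, any $\Phi\colon\hat{\Gamma}\to\hat{\Delta}$ should transport $SL(2,\F)$-representations of one group to those of the other. First I would isolate the elementary input: if $H\le SL(2,\F)$ is finitely generated, then the entries of a finite generating set lie in a subfield of $\F$ finitely generated over the prime field $\F_p$, and because $\F$ is algebraic over $\F_p$ (i.e.\ $\F=\overline{\F}_p$) this subfield is some finite field $\F_{p^n}$; hence $H\le SL(2,\F_{p^n})$ is finite. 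Consequently every $\rho\in R(\Gamma,\F)=\Hom(\Gamma,SL(2,\F))$ factors through a finite quotient of $\Gamma$, and by the extension property recalled in Section~2 --- together with the Nikolov--Segal theorem \cite{ns}, which makes the extension continuous --- $\rho$ has a unique extension $\hat{\rho}\colon\hat{\Gamma}\to SL(2,\F)$, with $\hat{\rho}(\hat{\Gamma})=\rho(\Gamma)$.

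Next I would set up the bijection explicitly. Fixing $\Phi\colon\hat{\Gamma}\to\hat{\Delta}$, the assignments $\rho\mapsto(\hat{\rho}\circ\Phi^{-1})|_{\Delta}$ and $\varphi\mapsto(\hat{\varphi}\circ\Phi)|_{\Gamma}$ define maps $R(\Gamma,\F)\to R(\Delta,\F)$ and $R(\Delta,\F)\to R(\Gamma,\F)$. Here one checks that $(\hat{\rho}\circ\Phi^{-1})|_{\Delta}$ really is a representation of $\Delta$ with image $\hat{\rho}(\Phi^{-1}(\hat{\Delta}))=\hat{\rho}(\hat{\Gamma})=\rho(\Gamma)$: the point is that $\Delta$ is dense in $\hat{\Delta}$ while $\hat{\rho}\circ\Phi^{-1}$ is continuous with finite (hence closed) image. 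Since extension and restriction to the dense discrete subgroup are mutually inverse bijections $\Hom(\Gamma,SL(2,\F))\leftrightarrow\Hom(\hat{\Gamma},SL(2,\F))$, and likewise for $\Delta$, while $\Phi$ identifies $\Hom(\hat{\Gamma},SL(2,\F))$ with $\Hom(\hat{\Delta},SL(2,\F))$, these two maps are inverse to each other; viewing $R(\Gamma,\F)$ and $R(\Delta,\F)$ inside the common set $\Hom(\hat{\Gamma},SL(2,\F))=\Hom(\hat{\Delta},SL(2,\F))$ via extension, we conclude $R(\Gamma,\F)=R(\Delta,\F)$.

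The step I expect to require the most care is not any single computation but the bookkeeping around the phrase ``as sets'': $R(\Gamma,\F)$ and $R(\Delta,\F)$ are literally different objects, and the content of the lemma is that a fixed $\Phi$ furnishes a canonical identification between them through the common profinite completion --- one that, after reindexing generators, respects the embeddings into $(SL(2,\F))^n$ and the conjugation action. I would want to state this compatibility cleanly, since the later results on character varieties and $A$-polynomials build on it. The genuinely used inputs are only (i) finiteness of finitely generated subgroups of $SL(2,\overline{\F}_p)$, (ii) the functorial equality $\Hom(\Gamma,G)=\Hom(\hat{\Gamma},G)$ for finite $G$, and (iii) Nikolov--Segal continuity; the rest is formal.
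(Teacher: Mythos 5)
Your argument is correct and follows essentially the same route as the paper's proof: finitely generated subgroups of $SL(2,\F)$ are finite, so representations extend uniquely to the profinite completion, and transporting along $\Phi$ and restricting to the dense discrete subgroup gives the two inverse inclusions. Your extra justification of the finiteness claim (entries generate a finite subfield of $\overline{\F}_p$) and the explicit bookkeeping via $\Hom(\hat{\Gamma},SL(2,\F))=\Hom(\hat{\Delta},SL(2,\F))$ only make explicit what the paper leaves implicit, including the tacit assumption $\F=\overline{\F}_p$ under which the finiteness step is valid.
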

\section{Detecting the A-polynomial}
\begin{comment}
   \begin{theorem}
    Let $M=S^3\setminus K$ be a knot in the 3-sphere. For $N=S^3\setminus J$ a knot with profinitely equivalent fundamental group, $M$ and $N$ have isomorphic (twisted) Alexander modules. 
    \begin{proof}
Given two knots, $K,J$ with profinitely equivalent knot groups, we get an isomorphism between the fundamental groups of the $k-$fold cyclic branched covers of $S^3$ over $K$ (by Xu's theorem). The homology group of the infinite cyclic cover with the associated $\Z[t^{\pm}]$ action is not a 
    \end{proof}
\end{theorem} 
\end{comment}

\begin{lemma}\label{mod_p_A-polynomial}
    Let $N_1,N_2$ be 1-cusped finite-volume hyperbolic 3-manifolds with a regular isomorphism $\widehat{\pi_1(N_1)}\cong \widehat{\pi_1(N_2)}$. Then $A_p^{N_1}(L,M)\cong A_p^{N_2}(L,M)$ for all primes $p$. 
    \begin{proof}
    We fix an identification $\Phi:\widehat{\pi_1(N_1)}\to\widehat{\pi_1(N_2)}$. Let $\F = \bar{\F}_p$ for some fixed prime $p$.
    \medbreak
    We show that the bijection from Lemma~\ref{replemma} restricts to a bijection $R_U(N_1,\F)$ and $R_U(N_2,\F)$. For any $\rho\in R_U(N_2,\F)$, it extends uniquely to $\hat{\rho}:\widehat{\pi_1(N_2)}\to SL(2,\F)$ whose images of the peripheral subgroup are upper triangular matrices. By 
    Lemma~\ref{xustructuretheorem} (1), $\overline{\pi_1(\partial N_2)} = g\overline{\pi_1(\partial N_1)}g^{-1}$ for some $g\in \widehat{\pi_1(N_1)}$. Thus, $\overline{\pi_1(\partial N_1)}$ has images in upper triangular matrices under $\hat{\rho}\circ\Phi$ so it restricts to an element in $R_U(N_1,\F)$. Similar to Lemma~\ref{replemma}, this induces an isomorphism between $R_U(N_1,\F)$ and $R_U(N_2,\F)$.
    \medbreak
    Let $\langle m,l\rangle$ be a basis for $\pi_1(\partial N_1)$. By Lemma~\ref{xustructuretheorem} (3), we have an isomorphism $\psi:\pi_1(\partial N_1)\to\pi_1(\partial N_2)$  such that the restriction of $\Phi$ to $\overline{\pi_1(\partial N_1)}$ satisfying
    \[m_{\mu}\circ \psi = C_g\circ \Phi:\overline{\pi_1(\partial N_1)}\to\overline{\pi_1(\partial N_2)}\]
    When $\Phi$ is regular, we have $\mu = \pm 1\in \hat{\Z}$.
    Consider the basis $\langle m',l'\rangle=\langle\psi(m),\psi(l)\rangle$   of $\pi_1(\partial N_2)$; $\hat{\rho} \circ \Phi$ maps $m,l$ to some conjugates of $\hat{\rho}(m')^{\pm 1},\hat{\rho}(l')^{\pm 1}$. Hence the eigenvalue maps $e_{N_1}=e_{N_2}$ are the same with respect to the basis $\langle m,l\rangle$ of $\pi_1(\partial N_1)$ and $\langle m',l'\rangle$ of $\pi_1(\partial N_2)$.
    \medbreak
    
        %$\hat{\rho}\circ (g\Phi(m)g^{-1})$ and $\rho(m')$ generate the same cyclic subgroup of $SL(2,\F)$. 
%\medbreak\noindent
    Thus, the eigenvalue maps have the same image and Zariski closure and by Lemma 36 \cite{ptilmanngarden}, we have $A_p^{N_1}(L,M)=A_p^{N_2}(L,M)\in \Z_p[m^\pm,l^\pm]$.
    \end{proof}
\end{lemma}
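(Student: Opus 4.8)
The plan is to upgrade the set-theoretic bijection $R(N_1,\F) = R(N_2,\F)$ from \Cref{replemma} to a bijection that is compatible with the peripheral data used to define the $A$-polynomial, and then to read off the equality of $A$-polynomials from the equality of eigenvalue varieties. First I would show the bijection restricts to $R_U(N_1,\F) \cong R_U(N_2,\F)$: given $\rho \in R_U(N_2,\F)$, extend it to $\hat\rho \colon \widehat{\pi_1(N_2)} \to SL(2,\F)$, precompose with $\Phi$, and note that by \Cref{xustructuretheorem}(1) the subgroup $\overline{\pi_1(\partial N_1)}$ maps under $\Phi$ into a conjugate of $\overline{\pi_1(\partial N_2)}$; since upper-triangularity is preserved under conjugation up to a global change of basis (and any representation is conjugate into $R_U$), the restriction to $\pi_1(N_1)$ lands in the $R_U$-locus after an overall conjugation, which does not change the $A$-polynomial. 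This gives the desired restricted bijection, and the same argument run with $\Phi^{-1}$ gives the inverse.

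Next I would track what happens to eigenvalues. Fix a basis $\langle m,l \rangle$ of $\pi_1(\partial N_1)$ and let $\psi \colon \pi_1(\partial N_1) \to \pi_1(\partial N_2)$ be the isomorphism from \Cref{xustructuretheorem}(3), so that $m_\mu \circ \psi = C_g \circ \Phi$ on $\overline{\pi_1(\partial N_1)}$ for some $g$ and some $\mu \in \hat\Z^\times$. The crucial point is that \emph{regularity} forces $\mu = \pm 1$: indeed $\Phi_{ab}$ is induced by a genuine isomorphism $h$ of first homology groups, and comparing $h$ with the map $\psi$ (both of which induce $\Phi$ on the relevant abelianized peripheral data up to $m_\mu$) pins the unit down to $\pm 1$. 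With $\mu = \pm 1$, the matrix $\hat\rho(\Phi(m))$ is conjugate to $\hat\rho(\psi(m))^{\pm 1}$ and similarly for $l$, so the ordered pair of eigenvalues $(m\text{-eigenvalue}, l\text{-eigenvalue})$ is at worst inverted simultaneously — and since $E_2$ is already symmetric under $(m,l) \mapsto (m^{-1}, l^{-1})$, the eigenvalue maps $e_{N_1}$ and $e_{N_2}$ (read in the bases $\langle m,l\rangle$ and $\langle \psi(m),\psi(l)\rangle$ respectively) have the same image in $(\F^\times)^2$.

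Finally, equal images means equal Zariski closures, hence $E_2(N_1) = E_2(N_2)$ as subvarieties of $(\F^\times)^2$, and deleting the zero-dimensional components and invoking \cite[Lemma 36]{ptilmanngarden} yields $A_p^{N_1}(L,M) = A_p^{N_2}(L,M)$ in $\Z_p[m^\pm, l^\pm]$, with the identification respecting the peripheral structure via $\psi$. Since $p$ was arbitrary, this holds for all primes $p$. I expect the main obstacle to be the bookkeeping in the second step: carefully justifying that conjugacy (the inner automorphism $C_g$), the ambiguity of $\pm 1$ in the exponents, and the a priori lack of control over which generator of $\pi_1(\partial N_2)$ corresponds to which all collapse — thanks to the built-in symmetry of the eigenvalue variety — into an honest equality of the eigenvalue maps rather than merely a correspondence up to unit scaling. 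Everything else is a routine transfer of structure along $\Phi$ together with the cited results.
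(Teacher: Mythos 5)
Your proposal follows essentially the same route as the paper's proof: restrict the representation bijection to the upper-triangular loci $R_U$ using Xu's peripheral correspondence, use regularity to pin the unit $\mu$ down to $\pm 1$, absorb the conjugation and the $\pm 1$ ambiguity into the built-in $(m,l)\mapsto(m^{-1},l^{-1})$ symmetry of the eigenvalue variety to get equal images of the eigenvalue maps, and then conclude via equal Zariski closures and \cite[Lemma 36]{ptilmanngarden}. The argument is correct and matches the paper, with your handling of the conjugation issue in the $R_U$ step if anything stated a bit more carefully.
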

\medbreak
We now improve a result of \cite{ptilmanngarden} so that the $A$-polynomial over $\SL(2,\C)$ is determined by the mod-$p$ $A$-polynomials. This shows that the $A$-polynomial is a regular profinite invariant in \Cref{pro:A_poly_profinite_inv}. To do so, we need the following lemma using some algebraic geometry. We say that a polynomial $f$ over a field $k$ is \textit{reduced} if it has no multiple factor when considered over $\bar{k}$.

\begin{lemma}\label{lem:polynomial_reduction}
If $f\in\Z[m,l]$ is reduced when considered over $\bar{\Q}$, then for all but finitely many prime $p$, $\overline{f} = f\bmod p$ is reduced over $\bar{\F}_p$.
\begin{proof}
    Denote $\varphi:\Spec \Z[m,l]/(f)\to \Spec \Z$ as the structure morphism of the natural map from $\Z$ to $\Z[m,l]/(f)$. We aim to apply the following lemma in the stack project:
\begin{lemma}[{\cite[\href{https://stacks.math.columbia.edu/tag/0578}{Lemma 0578}]{stacks-project}}]\label{lem:scheme_morphism}
    Let $\varphi:X\to Y$ be a morphism of schemes. Assume that
    \begin{enumerate}
        \item $Y$ is irreducible with generic point $\eta$,
        \item $X_\eta$ is geometrically reduced,
        \item $f$ is of finite type.
    \end{enumerate}
    Then there exists a nonempty open subscheme $V\subset Y$ such that $X_V\to V$ has geometrically reduced fibers.
\end{lemma}
Clearly $Y =\Spec \Z$ is irreducible with the unique generic point $(0)$. Note that \[X_{(0)}\times_{\Spec\Q}\Spec\bar{\Q}\cong \Spec \bar{\Q}[m,l]/(f)\]
$f$ is reduced implies that $X_{(0)}\times_{\Spec\Q}\Spec\bar{\Q}$ is reduced. $\varphi$ is clearly of finite type. Thus, there is an nonempty open subset $V$ of $\Spec\Z$ satisfying the conclusion in \Cref{lem:scheme_morphism}. $V$ must be of form \[V = \Spec \Z \backslash \{(p_i)\mid \text{prime }p_i \text{ for } i = 1,\cdots,n \}\]Then, for all but finitely many prime $p$,
\[X_{p}\times_{\Spec\F_p}\Spec\bar{\F}_p\cong \Spec \bar{\F}_p[m,l]/(\bar{f})\]
is reduced where $\bar{f} = f\bmod p$ and $\overline{f}$ is reduced over $\bar{\F}_p$. 
\end{proof}
\end{lemma}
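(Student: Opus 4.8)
The plan is to translate ``reduced'' into scheme-theoretic language and then run a standard spreading-out argument over $\Spec\Z$. First I would observe that, since $\overline{\Q}[m,l]$ is a unique factorization domain, $f$ being reduced over $\overline{\Q}$ is the same as the ideal $(f)$ being radical, i.e.\ the scheme $\Spec\overline{\Q}[m,l]/(f)$ being reduced; and since $\Q$ is perfect, this is in turn equivalent to $\Spec\Q[m,l]/(f)$ being \emph{geometrically} reduced over $\Q$. Exactly the same reasoning (over the perfect field $\F_p$) shows that, for a prime $p$, the polynomial $\overline{f}$ is reduced over $\overline{\F}_p$ if and only if $\Spec\F_p[m,l]/(\overline{f})$ is geometrically reduced over $\F_p$. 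So, writing $\varphi\colon\Spec\Z[m,l]/(f)\to\Spec\Z$ for the structure morphism, the hypothesis says precisely that the generic fibre of $\varphi$ is geometrically reduced, and the conclusion is that the fibre over $(p)$ is geometrically reduced for all but finitely many $p$.

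This is exactly a spreading-out statement: for a morphism of finite type with irreducible base, geometric reducedness of the generic fibre propagates to all fibres over some nonempty open subset of the base (see \cite[\href{https://stacks.math.columbia.edu/tag/0578}{Tag 0578}]{stacks-project}). Since $\Spec\Z$ is irreducible with generic point $(0)$, applying this to $\varphi$ yields a nonempty open $V\subset\Spec\Z$, necessarily of the form $\Spec\Z\setminus\{(p_1),\dots,(p_n)\}$, over which every fibre of $\varphi$ is geometrically reduced; for $p\notin\{p_1,\dots,p_n\}$ this is the statement that $\overline{f}$ is reduced over $\overline{\F}_p$. (The finitely many primes dividing the content of $f$, for which $\overline{f}$ could even vanish, are automatically among the $p_i$.)

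I expect the only real subtlety to be the translation in the first paragraph --- one has to be careful that ``reduced over the algebraic closure'' equals scheme-theoretic reducedness of $\Spec k[m,l]/(f)$ and then equals \emph{geometric} reducedness over $k$, the last step using perfectness of $\Q$ and of $\F_p$; it is easy to overlook that reducedness and geometric reducedness genuinely differ over imperfect fields. As a cross-check that also makes the exceptional set explicit, I would give an elementary variant: over a perfect field $k$ a nonzero $g\in k[m,l]$ is square-free precisely when $(g,\partial g/\partial m,\partial g/\partial l)$ cuts out a finite (equivalently, finite-dimensional as a $k$-algebra) subscheme of $\Aff^2_{\overline{k}}$; the $\Z$-algebra $A=\Z[m,l]/(f,\partial f/\partial m,\partial f/\partial l)$ has $0$-dimensional generic fibre by hypothesis, while $A\otimes\F_p=\F_p[m,l]/(\overline{f},\partial\overline{f}/\partial m,\partial\overline{f}/\partial l)$ because reduction mod $p$ commutes with differentiation, so by upper semicontinuity of fibre dimension together with Chevalley's theorem the set of primes with $\dim(A\otimes\F_p)\ge 1$ is finite, and outside it $\overline{f}$ is square-free.
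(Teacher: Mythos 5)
Your proposal is correct and follows essentially the same route as the paper: both apply the spreading-out result \cite[\href{https://stacks.math.columbia.edu/tag/0578}{Tag 0578}]{stacks-project} to the structure morphism $\Spec \Z[m,l]/(f)\to\Spec\Z$, using that the generic fibre is geometrically reduced to conclude reducedness of all but finitely many special fibres. Your extra care in translating ``reduced over the algebraic closure'' into geometric reducedness (via perfectness of $\Q$ and $\F_p$) and your elementary Jacobian cross-check are welcome additions, but the core argument is the same.
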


\begin{proposition}\label{pro:A_poly_profinite_inv}
    Let $N_1,N_2$ be regularly profinitely equivalent 1-cusped finite-volume hyperbolic 3-manifolds. Then $A_0^{N_1}(L,M)\cong A_0^{N_2}(L,M)$. 
    \begin{proof}
Let $M$ be an one-cusped $3$-manifold and denote its A-polynomials as $A_0$ and $A_p$. For a prime $p$, denote $\overline{A_0} = A_0 \bmod p$.
By \cite[Lemma 40]{ptilmanngarden}, for all but finitely many prime $p$,
\[\overline{A_0}(l,m) = g(l,m)A_p(l,m)\]
where $g\in \Z_p[l,m]$ consists of factors appearing in $A_p$. Applying \Cref{lem:polynomial_reduction}, $\overline{A_0}$ is reduced so that $g(l,m) = 1$ for all but finitely many $p$.
\medbreak
If $\widehat{\pi_1(N_1)}\cong \widehat{\pi_1(N_2)}$, the mod $p$ $A$-polynomials $A_p^{N_1}(l,m), A_p^{N_2}(l,m)$ agree for all prime $p$. Then $\overline{A_0^{N_1}}(l,m) = \overline{A_0^{N_2}}(l,m)$ for all but finitely many prime $p$, this forces the result.
\end{proof}
\end{proposition}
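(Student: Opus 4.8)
The plan is to descend from the mod-$p$ statement of \Cref{mod_p_A-polynomial} to characteristic $0$, using the comparison between $A_0^M$ and $A_p^M$ from \cite{ptilmanngarden} together with the new input \Cref{lem:polynomial_reduction}. Throughout I regard each $A_0^{N_i}$ as a primitive polynomial in $\Z[l,m]$ (well defined up to sign and a monomial factor $l^am^b$), with the variables on the two sides matched via the peripheral isomorphism $\psi$ of \Cref{xustructuretheorem} --- the same identification under which \Cref{mod_p_A-polynomial} yields $A_p^{N_1}=A_p^{N_2}$ for every prime $p$.

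First I would observe that $A_0^M$ is reduced over $\bar{\Q}$: by construction it generates the (radical) ideal of the one-dimensional part of the eigenvalue variety $E_2(M)$, hence is squarefree over $\C$ and therefore over $\bar{\Q}$. Applying \Cref{lem:polynomial_reduction} to $f=A_0^{N_i}$ produces a finite set $S_1$ of primes outside of which $\overline{A_0^{N_i}}:=A_0^{N_i}\bmod p$ is reduced over $\bar{\F}_p$, for $i=1,2$.

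Next I would invoke \cite[Lemma 40]{ptilmanngarden}: outside a finite set $S_2$ of primes one has $\overline{A_0^{N_i}}=g_i\cdot A_p^{N_i}$ in $\Z_p[l,m]$, where every irreducible factor of $g_i$ already occurs in $A_p^{N_i}$. For $p\notin S_1\cup S_2$ a nontrivial $g_i$ would force a repeated irreducible factor of $\overline{A_0^{N_i}}$ over $\bar{\F}_p$, contradicting reducedness; hence $g_i=1$, so $\overline{A_0^{N_i}}=A_p^{N_i}$. Using $A_p^{N_1}=A_p^{N_2}$ (valid for all $p$) we obtain $\overline{A_0^{N_1}}=\overline{A_0^{N_2}}$ for all $p\notin S_1\cup S_2$; enlarging the exceptional set by the finitely many primes dividing the leading coefficients so that the chosen normalizations survive reduction, we conclude $A_0^{N_1}\equiv A_0^{N_2}\pmod p$ for infinitely many $p$. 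A nonzero polynomial in $\Z[l,m]$ has no coefficient divisible by infinitely many primes, so $A_0^{N_1}=A_0^{N_2}$, i.e. $A_0^{N_1}\cong A_0^{N_2}$.

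The crux is the interaction between \Cref{lem:polynomial_reduction} and the precise shape of \cite[Lemma 40]{ptilmanngarden}: the former guarantees that passing to characteristic $p$ creates no multiple factors, while the latter guarantees that the discrepancy between $\overline{A_0}$ and $A_p$ is only a product of factors of $A_p$; together these force that discrepancy to be trivial for almost all $p$. Everything else is bookkeeping with finitely many excluded primes --- with the pleasant feature that \Cref{mod_p_A-polynomial} itself contributes none --- and keeping the sign-and-monomial normalization of $A_0$ (fixed as in \cite[Lemma 36]{ptilmanngarden}) consistent on both sides, while tracking that the variables $(l,m)$ correspond under $\psi$ in both the characteristic $0$ and characteristic $p$ pictures.
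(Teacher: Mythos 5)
Your proposal is correct and follows essentially the same route as the paper's proof: combine \cite[Lemma 40]{ptilmanngarden} with \Cref{lem:polynomial_reduction} to get $\overline{A_0^{N_i}}=A_p^{N_i}$ for almost all $p$, invoke \Cref{mod_p_A-polynomial}, and conclude from the congruences at infinitely many primes. Your added care about reducedness of $A_0$ over $\bar{\Q}$ and the sign/monomial normalization only makes explicit what the paper leaves implicit.
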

\begin{theorem}\label{onecusp}
    Let $N_1$ be a 1-cusped hyperbolic 3-manifold, and let $N_2$ be a 3-manifold with a regular isomorphism $\Phi:\widehat{\pi_1(N_1)}\cong\widehat{\pi_1(N_2)}$. Then there is a bijection between the strongly detected boundary slopes of $N_1$ and $N_2$. The Xu ( \cite[Theorem A]{xuDehnfillings}) correspondence of slopes restricts to this bijection on strongly detected boundary slopes. 
    \begin{proof}
    By Theorem 47 of \cite{ptilmanngarden}, a boundary slope is strongly detected in characteristic $0$ if and only if it is strongly detected in characteristic $p$ for all but finitely many primes $p$. 
    \medbreak By Lemma~\ref{mod_p_A-polynomial}, we show that the characteristic $p$ $A-$polynomial $A_p$ is a profinite invariant, i.e. that both cusped 3-manifolds have the same characteristic $p$ $A-$polynomial. Since the boundary slopes of the Newton polygon for $A_p^{N_1}(L,M)=A_p^{N_2}(L,M)$ are the strongly detected boundary slopes for $X(\pi_1(N_1),\F)\cong X(\pi_1(N_2),\F)$  by Theorem 37 \cite{ptilmanngarden}, $N_1,N_2$ have the same set of strongly detected boundary slopes in characteristic $0$.
    \medbreak
    It remains to show that the bijection between strongly detected boundary slopes is a restriction of Xu's correspondence in Lemma~\ref{xustructuretheorem}. For a fixed identification $\Phi:\widehat{\pi_1(N_1)}\to\widehat{\pi_1(N_2)}$, and for a strongly detected boundary slope $c\in\partial N_1$ corresponding to a primitive element in $\Z^2\cong\pi_1(\partial N_1)$, $c=m^\alpha l^\beta$ where $\{m,l\}$ is a basis for $\Z^2$ and $\alpha,\beta\in\Z$. There is an element $g\in\widehat{\pi_1(N_2)}$ such that $g\Phi(\overline{\pi_1(\partial N_1)})g^{-1}=\pi_1(\partial N_2)$. Let the ``conjugation by $g$" map be denoted by $C_g:\widehat{\pi_1(N_2)}\to\widehat{\pi_1(N_2)}$. There is a profinite unit $\mu\in\hat{\Z}^\times$ such $\mu\circ C_g\circ \Phi|_{\pi_1(\partial N_1)}$ has image $\pi_1(\partial N_2)$, and this is the isomorphism induced on $\pi_1$ by the map called $\psi_i$ in Lemma~\ref{xustructuretheorem} (b). We denote this isomorphism on the cusps by $\pi_1(\psi_i)$. The set $\{m',l'\}=\{\pi_1(\psi_i)(m),\pi_1(\psi_i)(l)\}$ is a basis for $\pi_1(\partial N_2)$ since $\pi_1\psi_i$ is an isomorphism and $\{m,l\}$ is a basis of $\Z^2\cong\pi_1(\partial N_1)$. To show that $m'^\alpha l'^\beta$ is a strongly detected boundary slope for $N_2$, we apply Lemma~\ref{mod_p_A-polynomial} to see that for all primes $p$, $A_p^{N_1}(L,M)=A_p^{N_2}(L,M)\in \Z_p[m^\pm,l^\pm]$ and therefore by Lemma~\ref{TG_slope_theorem} and Lemma~\ref{bslopesarebslopes}, $m'^\alpha l'^\beta$ is a strongly detected boundary slope for $N_2$. 
    \end{proof}
\end{theorem}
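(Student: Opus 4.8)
The plan is to assemble \Cref{onecusp} from the three ingredients already in place: the mod-$p$ invariance of the $A$-polynomial (\Cref{mod_p_A-polynomial}), the dictionary between Newton-polygon sides and strongly detected slopes (\Cref{bslopesarebslopes}), and the characteristic-$0$ versus characteristic-$p$ comparison (\Cref{TG_slope_theorem}). First I would fix the regular isomorphism $\Phi:\widehat{\pi_1(N_1)}\to\widehat{\pi_1(N_2)}$ and invoke \Cref{mod_p_A-polynomial} to get, for \emph{every} prime $p$, an equality $A_p^{N_1}(L,M)=A_p^{N_2}(L,M)$ in $\Z_p[m^{\pm},l^{\pm}]$, where the identification of variables is made via the peripheral isomorphism $\psi$ of \Cref{xustructuretheorem}(3) (using $\mu=\pm1$ by regularity). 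Note this requires $N_1$ to be hyperbolic and $1$-cusped so that $A_p^{N_1}$ is defined; $N_2$ inherits the same structure from $\Phi$.

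Next I would translate polynomial equality into slope equality. Since the two $A$-polynomials coincide (under the fixed basis $\langle m,l\rangle$ on $\partial N_1$ and the induced basis $\langle\psi(m),\psi(l)\rangle$ on $\partial N_2$), their Newton polygons are literally the same polygon in $\Z^2$, hence have the same set of edge slopes. By \Cref{bslopesarebslopes} applied over $\F=\bar\F_p$, those edge slopes are exactly the slopes strongly detected by $X(N_i,\bar\F_p)$; so $N_1$ and $N_2$ have the same set of strongly detected slopes in characteristic $p$, for every $p$. Then \Cref{TG_slope_theorem} says: for all but finitely many $p$, a slope is strongly detected in characteristic $0$ iff it is strongly detected in characteristic $p$. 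Combining, the characteristic-$0$ strongly detected slopes of $N_1$ and of $N_2$ agree (as subsets of $\mathbb{Q}\cup\{\infty\}$ read through the respective peripheral bases), which gives the desired bijection.

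Finally I would check that this bijection is the restriction of Xu's correspondence rather than some ad hoc matching. The point is that the identification of variables I used in \Cref{mod_p_A-polynomial} was precisely the one coming from $\psi_i$ in \Cref{xustructuretheorem}(2)--(3): a primitive class $c=m^\alpha l^\beta\in\pi_1(\partial N_1)$ maps to $c'=m'^\alpha l'^\beta$ where $\{m',l'\}=\{\psi_i(m),\psi_i(l)\}$, and $\psi_i$ on $\pi_1$ is characterized (up to the sign $\mu$) by $m_\mu\circ\psi_i=C_g\circ\Phi$ on $\overline{\pi_1(\partial N_1)}$. Since $A_p^{N_1}$ and $A_p^{N_2}$ are equal as polynomials under exactly this pairing of bases, a side of the Newton polygon of $A_p^{N_1}$ of slope $\alpha/\beta$ corresponds to the side of $A_p^{N_2}$ of the same slope $\alpha/\beta$ read in $\{m',l'\}$, i.e.\ to $\psi_i(c)$; this is Xu's slope. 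So the bijection on strongly detected slopes is the restriction of Xu's correspondence, as claimed.

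I do not expect a serious obstacle here — the work is all in the cited lemmas — but the one point deserving care is the bookkeeping of peripheral bases: one must make sure the \emph{same} basis identification is used in \Cref{mod_p_A-polynomial}, in reading the Newton polygon, and in stating Xu's correspondence, and that the orientation/sign ambiguity ($m\mapsto m^{\pm1}$, $l\mapsto l^{\pm1}$, resp.\ $\mu=\pm1$) does not change a slope $\alpha/\beta$ as an \emph{unoriented} slope. Since replacing $(m,l)$ by $(m^{-1},l^{-1})$ negates both $\alpha$ and $\beta$ and hence fixes the projectivized slope, this ambiguity is harmless, and the argument goes through.
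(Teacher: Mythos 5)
Your proposal matches the paper's own proof in both substance and structure: it combines \Cref{mod_p_A-polynomial}, the Newton-polygon dictionary of \Cref{bslopesarebslopes}, and the characteristic-$0$/$p$ comparison of \Cref{TG_slope_theorem}, and then identifies the resulting slope bijection with Xu's correspondence via the peripheral isomorphism $\psi_i$ of \Cref{xustructuretheorem}, exactly as the paper does. The extra remark on the harmlessness of the sign ambiguity $\mu=\pm1$ for unoriented slopes is a correct and welcome clarification of a point the paper leaves implicit.
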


\section{The case of multiple cusps}
\noindent To handle the case of hyperbolic 3-manifolds with multiple boundary components, we show that the {\it logarithmic limit set} of the character variety over characteristic $p$ is a regular profinite invariant. 
\begin{lemma}\label{lem:eigenval_variety}
Let $M,N$ be finite-volume cusped hyperbolic 3-manifolds with $\Phi:\widehat{\pi_1(M)}\to\widehat{\pi_1(N)}$ a regular
 isomorphism. The identification $\Phi$ induces an equality of the characteristic $p$ eigenvalue varieties $E_2(M,\F)=E_2(N,\F)$ and therefore an equality of the logarithmic limit sets $V_\infty(K^\mathbb{F})$ of $E_2^p(M)$. 
\begin{proof}
Fix $\F=\bar{\F}_p$ for some prime $p$. Similar as the proof in \Cref{mod_p_A-polynomial}, there is an identification $\Phi_*: R(N,\F)\to R(M,\F)$ sending $\rho$ to $(\hat{\rho}\circ \Phi)\mid_{\pi_1(M)}$. In addition, for each cusp correspondence $\partial_i M\leftrightarrow\partial_iN$, the restriction of $\hat{\rho}\circ\Phi$ to $\pi_1(\partial_{i}M)$ is $\hat{\rho}\circ (C_{g_i}\circ m_{\mu}\circ \psi_i)$ for some $g_i\in \widehat{\pi_1(N)}$ and $\mu = \pm 1\in \hat{\Z}$. Fix a basis $\langle\mathcal{L}_i,\mathcal{M}_i\rangle$ of $\pi_1(\partial_i M)$ and $\langle\mathcal{L}_i',\mathcal{M}_i'\rangle = \langle\psi(\mathcal{L}_i'),\psi(\mathcal{M}_i')\rangle$ of $\pi_1(\partial_iN)$. The trace function $I_{\alpha}\in\F[R(M,\F)]$ maps to $I_{\psi_i(\alpha)}\in \F[R(N,\F)]$ for any $\alpha\in \partial_i(M)$, which implies $R_E(M,\F)=R_E(N,\F)$ by definition. This directly implies the equality $E_2(M,\F)=E_2(N,\F)$ and the equality of the logarithmic limit sets.

\end{proof}
\end{lemma}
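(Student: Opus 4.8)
The plan is to run the one-cusped argument of \Cref{mod_p_A-polynomial} at every cusp simultaneously; the only genuinely new content is to check that the three extra relations $I_{\mathcal{M}_i}=m_i+m_i^{-1}$, $I_{\mathcal{L}_i}=l_i+l_i^{-1}$, $I_{\mathcal{M}_i\mathcal{L}_i}=m_il_i+m_i^{-1}l_i^{-1}$ that cut $R_E$ out of the representation variety are respected by the identification coming from $\Phi$. Fix $\F=\bar{\F}_p$ and the isomorphism $\Phi$. First I would recall from \Cref{replemma} that, since a finitely generated subgroup of $SL(2,\F)$ is finite, each $\rho\in R(N,\F)$ extends uniquely to $\hat{\rho}\colon\widehat{\pi_1(N)}\to SL(2,\F)$, so $\rho\mapsto\Phi_*(\rho):=(\hat{\rho}\circ\Phi)|_{\pi_1(M)}$ is a bijection $R(N,\F)\to R(M,\F)$; rewriting a fixed generating set of $\pi_1(M)$ as words in a generating set of $\pi_1(N)$ via $\Phi$ (approximating in the profinite topology) realizes this bijection through a morphism of the ambient affine spaces, so $R(M,\F)$ and $R(N,\F)$ are identified as algebraic sets.

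Next I would feed in Xu's structure theorem \Cref{xustructuretheorem}: $\Phi$ respects the peripheral structure, giving a matching $\partial_iM\leftrightarrow\partial_iN$ of boundary tori, isomorphisms $\psi_i\colon\pi_1(\partial_iM)\to\pi_1(\partial_iN)$, elements $g_i\in\widehat{\pi_1(N)}$, and profinite units $\mu_i$ with $m_{\mu_i}\circ\psi_i=C_{g_i}\circ\Phi$ on $\overline{\pi_1(\partial_iM)}$; \emph{regularity} of $\Phi$ is exactly what forces $\mu_i=\pm1$. The core of the argument is then a short trace computation. Fix a basis $\langle\mathcal{M}_i,\mathcal{L}_i\rangle$ of $\pi_1(\partial_iM)$ and transport it to the basis $\langle\mathcal{M}_i',\mathcal{L}_i'\rangle:=\langle\psi_i(\mathcal{M}_i),\psi_i(\mathcal{L}_i)\rangle$ of $\pi_1(\partial_iN)$. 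For $\rho\in R(N,\F)$ and $\rho'=\Phi_*(\rho)$, the relation $\Phi(\mathcal{M}_i)=g_i^{-1}\psi_i(\mathcal{M}_i)^{\mu_i}g_i$ in $\widehat{\pi_1(N)}$ gives $\rho'(\mathcal{M}_i)=\hat{\rho}(g_i)^{-1}\rho(\mathcal{M}_i')^{\mu_i}\hat{\rho}(g_i)$, and similarly for $\mathcal{L}_i$ and for $\mathcal{M}_i\mathcal{L}_i$, with the \emph{same} sign $\mu_i=\pm1$ throughout. Since $\tr$ on $SL(2,\F)$ is invariant under conjugation and under $A\mapsto A^{-1}$, and since $\mathcal{M}_i'$ and $\mathcal{L}_i'$ commute, this yields $\tr\rho'(\mathcal{M}_i)=\tr\rho(\mathcal{M}_i')$, $\tr\rho'(\mathcal{L}_i)=\tr\rho(\mathcal{L}_i')$ and $\tr\rho'(\mathcal{M}_i\mathcal{L}_i)=\tr\rho(\mathcal{M}_i'\mathcal{L}_i')$. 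Hence the map $(\Phi_*,\mathrm{id})$ on $\F^{4n}\times(\F\setminus\{0\})^{2h}$ carries the defining equations of $R_E(M,\F)$ onto those of $R_E(N,\F)$, so $R_E(M,\F)=R_E(N,\F)$. Projecting to the eigenvalue coordinates and taking Zariski closures gives $E_2(M,\F)=E_2(N,\F)$; because each eigenvalue variety is a Zariski closure it is reduced, hence the ideals $K_\F^M=K_\F^N$ coincide, and by the description $V_\infty=V_{sph}=\bigcap_{0\neq f\in K_\F}\mathrm{Sph}(\mathrm{Newt}(f))$ the logarithmic limit sets $V_\infty(K_\F^M)=V_\infty(K_\F^N)$ agree.

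I expect the main obstacle to be bookkeeping rather than a deep difficulty: tracking the conjugators $g_i$ and the units $\mu_i$ at all cusps at once, and in particular noticing that a \emph{single} sign $\mu_i=\pm1$ is applied to the entire peripheral subgroup --- which is what makes the $I_{\mathcal{M}_i\mathcal{L}_i}$ relation survive (independent signs on $\mathcal{M}_i$ and $\mathcal{L}_i$ would break it). This is why one must invoke \Cref{xustructuretheorem}(3), not just (1)--(2). A second point requiring a little care is upgrading the set-level statement $R(M,\F)=R(N,\F)$ of \Cref{replemma} to an honest equality of reduced algebraic sets (equivalently, of radical ideals), since it is this ideal-level equality that licenses the $V_{sph}$ description of the logarithmic limit set.
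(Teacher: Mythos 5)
Your proposal is correct and follows essentially the same route as the paper: the bijection $\Phi_*$ on representation varieties from \Cref{replemma}, Xu's peripheral correspondence with $\mu_i=\pm1$ forced by regularity, preservation of the peripheral trace functions (hence of the defining equations of $R_E$), and then equality of $E_2$ and of the logarithmic limit sets. You merely spell out the trace computation and the single-sign point that the paper leaves implicit in the line ``$I_{\alpha}$ maps to $I_{\psi_i(\alpha)}$,'' which is a faithful elaboration rather than a different argument.
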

\noindent Theorem~\ref{onecusp} is a warmup for
\begin{theorem}\label{multcusps}
    Let $M$ be a finite-volume hyperbolic 3-manifold with $n$ cusps. If $N$ is a 3-manifold with $\widehat{\pi_1(M)}\cong\widehat{\pi_1(N)}$ via a regular isomorphism, then there is a bijection between the strongly detected boundary curves of $M$ and $N$.
    \begin{proof}
    Let the torus cusps of $M$ be $\partial_1 M,\dots,\partial_nM$. By Theorem 9.1 \cite{WZ1} and Theorem 1 \cite{Chagas2016Hyperbolic3G}, $N$ is a hyperbolic finite-volume 3-manifold with $n$ cusps $\partial_1 N,\dots,\partial_n N$. Let $S\subset M$ be an embedded incompressible surface with strongly detected boundary slopes $s_1,\dots,s_n$ (some possibly empty).
    \medbreak
    By Theorem 45 \cite{ptilmanngarden}, there is an element $\zeta\in V_\infty(K_\F^M)$ (the logarithmic limit set) with rational coordinate ratios with $[s_i]=\varphi(T_n\zeta)$ where the matrix $T_n$ and $\varphi: S^{2n-1}\to S^{2n-1}$ are defined in \cref{subsec:log_limit_set}. Since $E_2(M,\F)$ and $E_2(N,\F)$ share the same logarithmic limit set by \Cref{lem:eigenval_variety}, we have $[s_i]$ is also a projectivised boundary curve detected by $E_2(N,\F)$. 
    \medbreak 
    We only need to remark that the coordinate $[s_i]$ is interpreted under the fixed basis $\langle\mathcal{L}_i,\mathcal{M}_i\rangle$ of $\pi_1(\partial_i M)$ and $\langle\mathcal{L}_i',\mathcal{M}_i'\rangle = \langle\psi_i(\mathcal{L}_i),\psi_i(\mathcal{M}_i)\rangle$ of $\pi_1(\partial_iN)$, where the $\psi_i$ are the isomorphisms $\psi_i:\pi_1\partial_i M\to\pi_1\partial_iN$ in Lemma~\ref{xustructuretheorem}.

    %we have slopes $\psi_i(s_i)$ of $\partial_iN$, and we wish to show that there is an embedded incompressible surface $S'$ in $N$ with slope $\{\psi_i(s_i)\}$. 
%[
%    N_n=
%    \begin{pmatrix}
%        0 &1 & & &\\
%        -1& 0& & &\\
%        &  &\ddots & &\\
%        & &  & 0& 1\\
%         & &  & -1& 0\\
%    \end{pmatrix}
%    \]
    \end{proof}
\end{theorem}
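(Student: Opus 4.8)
The plan is to reduce the multi-cusped statement to the logarithmic limit set machinery of \cite{ptilmanngarden}, in direct analogy with the proof of \Cref{onecusp}, with the main new input being \Cref{lem:eigenval_variety}. First I would recall that by Theorem 9.1 of \cite{WZ1} and Theorem 1 of \cite{Chagas2016Hyperbolic3G}, a profinite isomorphism $\widehat{\pi_1(M)}\cong\widehat{\pi_1(N)}$ forces $N$ to also be a finite-volume hyperbolic $3$-manifold, and by \Cref{xustructuretheorem}(1) the numbers of cusps agree, so we may label the cusps $\partial_iM\leftrightarrow\partial_iN$ compatibly and fix the isomorphisms $\psi_i:\pi_1\partial_iM\to\pi_1\partial_iN$ of \Cref{xustructuretheorem} together with the data $g_i,\mu_i$ of part (3). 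Regularity gives $\mu_i=\pm1$, which is precisely what is needed so that the eigenvalue maps (not merely their projectivizations up to $\hat\Z^\times$-scaling) are matched.

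The second step is to invoke \Cref{lem:eigenval_variety}: fixing $\F=\bar\F_p$ and fixing bases $\langle\mathcal L_i,\mathcal M_i\rangle$ of $\pi_1(\partial_iM)$ and $\langle\mathcal L_i',\mathcal M_i'\rangle=\langle\psi_i(\mathcal L_i),\psi_i(\mathcal M_i)\rangle$ of $\pi_1(\partial_iN)$, the induced map $\Phi_*:R(N,\F)\to R(M,\F)$ carries the trace relations defining $R_E(M,\F)$ to those defining $R_E(N,\F)$, so $E_2(M,\F)=E_2(N,\F)$ as subvarieties of $(\F\setminus\{0\})^{2n}$, and hence their logarithmic limit sets $V_\infty(K_\F^M)=V_\infty(K_\F^N)$ agree as subsets of $S^{2n-1}$ (using $V_\infty=V_{val}=V_{sph}$ from \cite{B_logarithmic_limit_set}). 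Then, for an embedded incompressible surface $S\subset M$ with strongly detected boundary slopes $s_1,\dots,s_n$ (some possibly empty), Theorem 45 of \cite{ptilmanngarden} produces an element $\zeta\in V_\infty(K_\F^M)$ with rational coordinate ratios such that $[s_i]=\varphi(T_n\zeta)$ for each $i$, where $T_n$ and $\varphi:S^{2n-1}\to S^{2n-1}$ are as in \Cref{subsec:log_limit_set}. Since the logarithmic limit sets coincide, the same $\zeta$ lies in $V_\infty(K_\F^N)$, and applying the converse direction of Theorem 45 of \cite{ptilmanngarden} (together with Theorem 47 to pass between characteristic $p$ and characteristic $0$, or by working directly over $\C$ if the characteristic-$0$ version of Theorem 45 is available) exhibits $[s_i]$, read in the basis $\langle\mathcal L_i',\mathcal M_i'\rangle$, as a strongly detected projectivized boundary curve of $N$. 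This assignment $S\mapsto$ (the surface realizing these slopes in $N$) is the desired bijection, and the symmetry of the construction under $\Phi^{-1}$ gives the inverse map.

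The one point requiring care — and the step I expect to be the main obstacle — is the bookkeeping of \emph{which} basis each projectivized coordinate is expressed in, and making sure the $\psi_i$ are applied consistently when transporting $[s_i]$ from the $M$-side to the $N$-side. A strongly detected boundary slope is intrinsically only a curve on $\partial_iN$ up to isotopy, but to say "the Xu correspondence restricts to this bijection" one must check that the curve $(\mathcal M_i')^{\alpha}(\mathcal L_i')^{\beta}$ obtained by reading the $M$-side coordinates $(\alpha,\beta)$ through $\psi_i$ is exactly the curve detected by $V_\infty(K_\F^N)$ — this is where the equality $E_2(M,\F)=E_2(N,\F)$ \emph{as honestly embedded varieties in the same coordinates} (not just abstractly isomorphic) is essential, and is exactly what \Cref{lem:eigenval_variety} provides because regularity kills the $\hat\Z^\times$-ambiguity in \Cref{xustructuretheorem}(3). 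A secondary technical point is that Theorem 45 of \cite{ptilmanngarden} is about boundary curves detected by ideal points of the eigenvalue variety; one should note that for hyperbolic $M$ the relevant surfaces are genuinely boundary surfaces and the logarithmic limit set on the $N$-side records the same slope data, so no closed-surface degeneracy intervenes in the correspondence statement. Beyond these, the argument is a routine transcription of the one-cusped proof with $h=n$.
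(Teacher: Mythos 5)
Your proposal is correct and follows essentially the same route as the paper: hyperbolicity and cusp count via \cite{WZ1} and \cite{Chagas2016Hyperbolic3G}, equality of the characteristic-$p$ eigenvalue varieties and their logarithmic limit sets via \Cref{lem:eigenval_variety} (with regularity forcing $\mu_i=\pm1$), and Theorem 45 of \cite{ptilmanngarden} to translate strongly detected boundary curves into rational points of $V_\infty$ and back, read in the bases matched by the $\psi_i$ of \Cref{xustructuretheorem}. Your additional remarks on the characteristic-$0$/$p$ passage and the inverse via $\Phi^{-1}$ are consistent with, and slightly more explicit than, the paper's argument.
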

\section{Examples}
In \cite[Theorem 1.2]{Ni_Zhang_detection}, they showed that an infinite family of Eudave-Mu\~noz knots $K = k(\ell,-1,0,0)$ with $\ell>1,\ell\in\Z$ can be detected using the knot Floer homology and the $A$-polynomial. We show that the profinite completions of fundamental groups of every knot in this family do not admit regular profinite isomorphisms to the profinite completions of the fundamental groups of any other non-homeomorphic compact 3-manifold group. Note that Xu in \cite{xuDehnfillings}
showed that another family $k(3,1,n,0)$ are profinitely rigid.
\begin{theorem}
Let $K = k(\ell_*,-1,0,0)$ be an Eudave-Mu\~noz knot with $l_*>1$ and let $M = S^3\backslash K$. If $\Phi:\widehat{\pi_1(N)}\cong\widehat{\pi_1(M)}$ is a regular isomorphism, then $M\cong N$. 
\end{theorem}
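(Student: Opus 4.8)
The strategy is to first promote $N$ to the exterior of a genuine knot in $S^3$, then check that this knot inherits the invariants of $K=k(\ell_*,-1,0,0)$ that feed the detection theorem of Ni and Zhang, and finally invoke \cite[Theorem 1.2]{Ni_Zhang_detection}. First I would record that $N$ is itself a one-cusped finite-volume hyperbolic $3$-manifold with $H_1(N;\Z)\cong\Z$. Since $\Phi$ is regular, $\Phi_{ab}$ is induced by an isomorphism $H_1(N;\Z)\xrightarrow{\sim}H_1(M;\Z)\cong\Z$; and, exactly as in the proof of \Cref{multcusps}, \cite[Theorem 9.1]{WZ1} together with \cite[Theorem 1]{Chagas2016Hyperbolic3G} force $N$ to be finite-volume hyperbolic (here $M$ is hyperbolic, since Eudave-Mu\~noz knots are), with the cusp count, namely one, a profinite invariant.

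Next I would show that $N=S^3\setminus J$ for a nontrivial knot $J\subset S^3$. Applying \Cref{xustructuretheorem} to $\Phi^{-1}$ produces an isomorphism $\psi\colon\pi_1(\partial M)\to\pi_1(\partial N)$ with $\widehat{\pi_1(M_c)}\cong\widehat{\pi_1(N_{\psi(c)})}$ for every slope $c$ on $\partial M$. Taking $c=\mu$ the meridian of $K$, so that $M_\mu\cong S^3$, gives $\widehat{\pi_1(N_{\psi(\mu)})}\cong\widehat{\pi_1(S^3)}=1$; since closed $3$-manifold groups are residually finite, $\pi_1(N_{\psi(\mu)})=1$, and the Poincar\'e conjecture yields $N_{\psi(\mu)}\cong S^3$. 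Thus $N$ is the exterior of the core $J$ of the surgery solid torus, and $J$ is nontrivial because $N$ is hyperbolic. Since $\Phi$ is regular, $\psi$ respects $H_1$ up to sign, so it sends the (null-homologous) longitude of $K$ to that of $J$; and as $N$ Dehn-fills to $S^3$ along $\psi(\mu)$, the knot complement theorem of Gordon and Luecke forces $\psi(\mu)$ to be $\pm$ the meridian of $J$. Hence $\psi$ matches the canonical meridian--longitude framings up to sign.

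Consequently $J$ shares with $K$ the data that enter the detection theorem. By \Cref{pro:A_poly_profinite_inv} and the framing identification above, $A_0^{J}(L,M)=A_0^{K}(L,M)$; by \cite{BF}, a regular isomorphism of knot groups induces an isomorphism of Alexander modules (so $\Delta_J=\Delta_K$) and preserves fiberedness, and by Liu's Thurston-norm theorem \cite{Y} the Seifert genus is preserved. Granting — see the next paragraph — that these suffice to conclude $\widehat{HFK}(J)\cong\widehat{HFK}(K)$ for this family, \cite[Theorem 1.2]{Ni_Zhang_detection} identifies $J$, up to mirror image, with $k(\ell_*,-1,0,0)=K$. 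Since a knot and its mirror have homeomorphic complements, $N=S^3\setminus J\cong S^3\setminus K=M$.

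The crux is the knot Floer homology comparison: unlike the $A$-polynomial or the Alexander module, $\widehat{HFK}$ is not a formal consequence of a profinite isomorphism. For the family $k(\ell,-1,0,0)$ one expects $\widehat{HFK}$ to be recoverable from classical data — the Alexander polynomial, Seifert genus, fiberedness, and the Blanchfield pairing — each of which is preserved under a regular isomorphism by \cite{BF} and \cite{Y}; the real work is to make this recovery precise, or equivalently to verify that the proof of \cite[Theorem 1.2]{Ni_Zhang_detection} consumes only such invariants. Everything else is assembly of \Cref{xustructuretheorem}, \Cref{pro:A_poly_profinite_inv}, residual finiteness of $3$-manifold groups, and the Poincar\'e conjecture.
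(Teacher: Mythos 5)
Your overall route coincides with the paper's: recognize $N$ as a hyperbolic knot exterior in $S^3$ with the meridian--longitude framing matched up to sign, feed the $A$-polynomial equality from \Cref{pro:A_poly_profinite_inv} into the detection theorem of Ni--Zhang, and conclude $M\cong N$. Your re-derivation of the knot-exterior step (filling along $\psi(\mu)$, residual finiteness of closed $3$-manifold groups, the Poincar\'e conjecture, and Gordon--Luecke for the meridian) is a legitimate substitute for the paper's direct citation of \cite[Theorem C]{xuDehnfillings}, which already supplies both the knot-exterior statement and the fact that the slope correspondence is $\pm 1$ in the standard bases.

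However, there is a genuine gap exactly where you flag it. To invoke \cite[Theorem 1.2]{Ni_Zhang_detection} you assume $\widehat{HFK}(J)\cong\widehat{HFK}(K)$, and knot Floer homology is not known to be an invariant of regular profinite isomorphisms; your proposal only says one \emph{expects} it to be recoverable from classical data and that ``the real work'' is to verify that Ni--Zhang's proof consumes only such invariants. That verification is precisely how the paper closes the argument: for the family $k(\ell,-1,0,0)$ with $\ell>1$, the proof of \cite[Theorem 1.2]{Ni_Zhang_detection} uses the knot Floer homology only through the Seifert genus, and the Seifert genus is preserved under regular profinite isomorphisms by \cite{BF} (one can also argue via Liu's Thurston-norm result \cite{Y}). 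Since genus preservation is already on your list of available tools, the fix is exactly the step you postponed; but as written, the proposal does not establish that the hypotheses of the detection theorem are satisfied, so the crucial step is missing rather than proved.
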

\begin{proof}
    Let $N$ be a compact orientable $3$-manifold with a regular isomorphism $\widehat{\pi_1(N)}\cong \widehat{\pi_1(M)}$. By \cite[Theorem C]{xuDehnfillings}, $N$ is a hyperbolic knot. 
    \medbreak
    For a knot complement $M = S^3\backslash K$, we can choose a meridian-longitude basis $\langle\mathcal{M},\mathcal{L}\rangle$ for $\pi_1\partial M\cong\Z^2$  such that $\mathcal{M}$ normally generates $\pi_1(M)$ and $\mathcal{L}$ is the unique null homologous slope that bounds a Seifert surface in $M$. 
    \medbreak
    By \cite[Theorem C]{xuDehnfillings}, the induced map $\psi:slp(M)\to slp(N)$ is $\pm 1$ under the stand meridian-longitude basis for both knots.
    \Cref{pro:A_poly_profinite_inv} implies that $M$ and $N$ have the same A-polynomial $A_0(l,m)$. \cite[Theorem 1.2]{Ni_Zhang_detection} shows that the knot Floer homology and $A$-polynomial distinguish $K$ among all knots in $S^3$ (in particular, among hyperbolic knots). Indeed, they only use the information of Seifert genus from the Floer homology. It is well-known that Seifert genus is a profinite invariant by \cite{BF}. Thus, \cite[Theorem 1.2]{Ni_Zhang_detection} gives us the desired result.
\end{proof}

%In their proof, the knot Floer homology is only used for the detection the knot genus. Due to \cite[Theorem 1.2]{BF}, profinite completion

Let $M = S^3\backslash K$ where $K = k(\ell,-1,0,0)$. Suppose that there is another hyperbolic cusped $3$-manifold $N$ profinitely isomorphic to $M$, then Since $M$ has exactly one half-integral toroidal slope $r$, we have the following result.

\begin{lemma}\label{lem:pro_eudave_knot}
    Let $M = S^3\backslash K$ where $K = k(l,m,n,p)$ is an Eudave-Mu\~noz knot. Suppose that $N$ is a $3$-manifold with $\widehat{\pi_1(M)}\cong\widehat{\pi_1(N)}$. Then $N$ is also an Eudave-Mu\~noz knot with a half-integral toroidal slope $r'$. In addition, the induced map $\psi:slp(M)\to slp(N)$ maps $r$ to $r'$.
    \begin{proof}
        By \cite[Theorem C]{xuDehnfillings}, $N$ is a hyperbolic knot and $\widehat{\pi_1(M_r)}\cong\widehat{\pi_1(N_{\psi(r)})}$. Then \cite[Lemma 5.4]{xuDehnfillings} implies that $\psi(r)$ is a half-integral toroidal slope of $N$. The result follows from the fact that Eudave-Mu\~noz knots are the only knots admitting one (and exactly one) non-integral toroidal Dehn surgery as shown in \cite{Gordon_Luecke}.
    \end{proof}
\end{lemma}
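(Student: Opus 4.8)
The plan is to transport the half-integral toroidal Dehn surgery of $M$ across the profinite isomorphism and then invoke the Gordon-Luecke classification of knots with non-integral toroidal surgeries; here $r$ denotes the unique half-integral toroidal slope of $M$, which exists because $K$ is an Eudave-Mu\~noz knot. First I would record that $N$ is a hyperbolic knot complement in $S^3$: since $M=S^3\setminus K$ is a hyperbolic knot complement, \cite[Theorem C]{xuDehnfillings} shows that $N$ is again a hyperbolic knot complement in $S^3$, and that the induced slope correspondence $\psi\colon slp(M)\to slp(N)$ is $\pm 1$ with respect to the standard meridian-longitude bases. In particular $\psi$ sends the meridian slope of $M$ to the meridian slope of $N$ and preserves the distance of a slope from the meridian.

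Next I would apply the Dehn-filling structure theorem \Cref{xustructuretheorem} to the slope $r$, obtaining a profinite isomorphism $\widehat{\pi_1(M_r)}\cong\widehat{\pi_1(N_{\psi(r)})}$ between the two closed surgered manifolds. Since $r$ is a toroidal slope, $M_r$ is an irreducible closed orientable $3$-manifold containing an essential torus, and toroidality of such a manifold is a profinite invariant, following from the profinite invariance of the JSJ decomposition of closed $3$-manifolds; this is exactly the content packaged in \cite[Lemma 5.4]{xuDehnfillings}, so $N_{\psi(r)}$ is toroidal as well. Because $r$ lies at distance $2$ from the meridian of $M$ and $\psi$ preserves distance from the meridian, the slope $r':=\psi(r)$ lies at distance $2$ from the meridian of $N$ and is therefore half-integral, in particular non-integral. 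Hence $N$ is a hyperbolic knot in $S^3$ admitting a non-integral toroidal surgery along $r'$.

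Finally, the Gordon-Luecke theorem \cite{Gordon_Luecke} classifying knots with non-integral toroidal surgeries gives that $N=S^3\setminus K'$ for an Eudave-Mu\~noz knot $K'$, and that such a knot possesses a unique non-integral (necessarily half-integral) toroidal slope; since $\psi(r)$ is such a slope, it must be that one, i.e.\ $\psi(r)=r'$. The step I expect to require the most care is the transfer of toroidality to $N_{\psi(r)}$ --- ruling out that it is hyperbolic, Seifert fibered, or reducible --- which rests on the profinite invariance of the JSJ decomposition of closed $3$-manifolds together with the irreducibility of Eudave-Mu\~noz toroidal surgeries, and is the reason the argument leans on \cite[Lemma 5.4]{xuDehnfillings} rather than reproving it here.
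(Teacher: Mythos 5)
Your proposal is correct and follows essentially the same route as the paper: Xu's Theorem C to see $N$ is a hyperbolic knot complement and to transport the filling, the profinite isomorphism $\widehat{\pi_1(M_r)}\cong\widehat{\pi_1(N_{\psi(r)})}$ together with \cite[Lemma 5.4]{xuDehnfillings} to conclude $\psi(r)$ is a half-integral toroidal slope, and then the Gordon--Luecke classification of non-integral toroidal surgeries. The only cosmetic difference is that you re-derive half-integrality from the $\pm1$ slope correspondence of Theorem C, whereas the paper draws it directly from Lemma 5.4; both are fine.
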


%%%%%%%%%%%%%%%%%%%%%%%%%%%%%%%%%%%%%%%%%%%%%%%%%%%%
%%%%%%%%%%%%%%%%%%%%%%%%%%%%%%%%%%%%%%%%%%%%%%%%%%%%
%%%%%%%%%%%%%%%%%%                                           %%%%%%%%%%%%%%%%%
%%%%%%%%%%%%%%%%%%		Bibliography		%%%%%%%%%%%%%%%%%
%%%%%%%%%%%%%%%%%%                                           %%%%%%%%%%%%%%%%%
%%%%%%%%%%%%%%%%%%%%%%%%%%%%%%%%%%%%%%%%%%%%%%%%%%%%
%%%%%%%%%%%%%%%%%%%%%%%%%%%%%%%%%%%%%%%%%%%%%%%%%%%%

\bibliography{main}

\end{document}